\documentclass{amsart}

\usepackage{amssymb}
\usepackage{amscd,enumerate} 
\usepackage[utf8]{inputenc} 
\usepackage{color}

\makeatletter
\numberwithin{equation}{section}
\numberwithin{figure}{section}
\theoremstyle{plain}
\newtheorem{thm}{\protect\theoremname}[section]
\theoremstyle{definition}
\newtheorem{defn}[thm]{\protect\definitionname}
\theoremstyle{remark}
\newtheorem{rem}[thm]{\protect\remarkname}
\theoremstyle{plain}
\newtheorem{lem}[thm]{\protect\lemmaname}
\theoremstyle{definition}
\newtheorem{example}[thm]{\protect\examplename}
\theoremstyle{plain}
\newtheorem{prop}[thm]{\protect\propositionname}
\theoremstyle{plain}
\newtheorem{cor}[thm]{\protect\corollaryname}

\makeatother

\usepackage{babel}
\providecommand{\corollaryname}{Corollary}
\providecommand{\definitionname}{Definition}
\providecommand{\examplename}{Example}
\providecommand{\lemmaname}{Lemma}
\providecommand{\propositionname}{Proposition}
\providecommand{\remarkname}{Remark}
\providecommand{\theoremname}{Theorem}

\author[E. Garc\'\i a]{Esther Garc\'\i a}
\address{ Departamento de Matem\'{a}tica  Aplicada, Ciencia e Ingenier\'{\i}a de los Materiales y Tecnolog\'{\i}a Electr\'onica,
Universidad Rey Juan Carlos, 28933 M\'{o}s\-to\-les (Madrid), Spain}
\email{esther.garcia@urjc.es}

\author[M. G\'omez Lozano]{Miguel G\'omez Lozano}
\address{Departamento de \'Algebra, Geometr\'{\i}a y
Topolog\'{\i}a, Universidad de M\'alaga, 29071 M\'alaga, Spain}
\email{miggl@uma.es}

\author[R. Mu\~noz Alc\'azar] {Rub\'en Mu\~noz Alc\'azar}
\address{ Departamento de \'Algebra, Geometr\'{\i}a y
Topolog\'{\i}a, Universidad de M\'alaga, 29071 M\'alaga, Spain}
\email{rubenm.alcazar@uma.es}
\author[G. Vera de Salas]{Guillermo Vera de Salas}
\address{ Departamento de Matem\'{a}tica  Aplicada, Ciencia e Ingenier\'{\i}a de  Materiales y Tecnolog\'{\i}a Electr\'onica,
Universidad Rey Juan Carlos, 28933 M\'{o}s\-to\-les (Madrid), Spain}
\thanks{All authors were partially supported by the Junta de Andaluc\'{\i}a PPRO-FQM264-G-2023 (FQM264-G-FEDER) and by B42025-003: Ayudas para Proyectos Puente de la UMA}
\email{guillermo.vera@urjc.es}

\begin{document}
\title[Leibniz algebras and their connection to Jordan pair disystems]{Leibniz algebras and their connection to Jordan pair disystems}

\maketitle

\bigskip
{\footnotesize \textit{Key words}: Leibniz algebra; Jordan dialgebra; Jordan pair disystem; TKK-construction;
subquotients; homotopes at elements.}

{\footnotesize \textit{2020 Mathematics Subject Classification}: 17A32, 17C50}

\section*{Abstract}
In this paper we present a  generalization of Jordan pairs, called
Jordan pair disystem, which appear naturally as the wings of a   Leibniz algebra with a finite $\mathbb{Z}$-grading;  conversely,
we give the Tits-Kantor-Koecher construction for Jordan pair disystems, obtaining a Leibniz algebra with a short $\mathbb{Z}$-grading. This construction extends the construction given by Gubarev and Kolesnikov for Jordan dialgebras \cite{TKKDialgebras}.
We introduce homotopes for Jordan pair disystems at elements, extending the notion given in \cite{ASJ}, and we show that we obtain again a Jordan dialgebra positively solving the  open question posed in \cite[Remark 2]{ASJ}. Moreover, we introduce abelian inner ideals and their kernels  for  Leibniz algebras, and prove that the subquotient of a Leibniz algebra with respect to an abelian inner ideal is a Jordan pair disystem.
This notion of subquotient extends the construction of Jordan dialgebras at $Q$-Jordan elements given by Felipe and Vel\'asquez in \cite{QJAlg}.

\section{Introduction}

The concept of  Leibniz algebras was introduced by A. Bloh in 1965 \cite{Bloh} and rediscovered by Loday in 1993 as a non-anticommutative version of Lie algebras, see \cite{LodayLeibniz}.  As a generalization
of the Lie algebras, the skew-symmetric property is omitted and only
the property of Lie algebras that the right multiplication operators
are derivations is  preserved (in the context of (right) Leibniz algebras, this second property is called the (right) Leibniz identity).  More precisely, if $\phi$ is a ring of scalars, i.e., $\phi$ is an associative, commutative and unital ring, a
(right) Leibniz algebra $L$ is a module over $\phi$ such that
$
[[x,y],z]=[[x,z],y]+[x,[y,z]]
$
for every $x,y,z\in L$.

During the last 30 years, the theory of Leibniz algebras has been a subject of active research. In fact, many results from the theory of Lie algebras have been extended to the case of Leibniz algebras, although problems specific to this type of structure arise, see for example the monography by Ayupov, Omirov and Rakhimov \cite{AyuOmiRak}. For instance, J. L. Loday and T. Pirashvili proved in \cite{LodayPirashvili}
that  every Leibniz algebra over
a field can be embedded into an associative dialgebra, where an associative
dialgebra over a field is a vector space $D$ with two associative bilinear products
$
\dashv:D\times D\longrightarrow D$  and $\vdash:D\times D\longrightarrow D,
$
which
satisfy the identities:
\[
x\dashv(y\dashv z)=x\dashv(y\vdash z), \, (x\vdash y)\dashv z=x\vdash(y\dashv z)\text{ and } (x\vdash y)\vdash z=(x\dashv y)\vdash z
\]
for every $x,y,z\in D$. In this case, the vector space $D$ with
the product $[x,y]=x\dashv y-y\vdash x$ for every $x,y\in D$ is
a (right) Leibniz algebra denoted by $D^{(-)}$.

In 2008, R. Vel\'asquez and R. Felipe introduced
the notion of quasi-Jordan algebra, see \cite{QJAlg}. A (right) quasi-Jordan algebra
$J$  is
a $\phi$-module with a bilinear product $\bullet:J\times J\longrightarrow J$
such that
$$
x\bullet(y\bullet z)=x\bullet(z\bullet y)\hbox{ and }(y\bullet x)\bullet x^{2}=(y\bullet x^{2})\bullet x.
$$
As expected,
if $(D,\dashv,\vdash)$ is an associative dialgebra, then $D$ with the product
$x\bullet y=x\dashv y+y\vdash x$ for every $x,y\in D$ is a (right)
quasi-Jordan algebra denoted by $D^{(+)}$.
However,  in 2010  Bremner  proved that the adequate formalization of the class of algebras obtained in this way should involve an extra
identity: if $(D,\dashv,\vdash)$
is an associative dialgebra over a ring of scalars $\phi$, with $\frac{1}{2},\frac{1}{3}\in\phi$,
then the quasi-Jordan algebra $D^{(+)}$ also satisfies
\[
(x,y^{2},z)=2(x,y,z)\bullet y
\]
for every $x,y,z\in D^{(+)}$, where $(a,b,c)=(a\bullet b)\bullet c-a\bullet(b\bullet c)$, see \cite{BremnerQuasi}.
This identity was also obtained by Kolesnikov in \cite{K-identity}. Moreover, when applying the Kolesnikov-Pozhidaev algorithm to a linear Jordan algebra over a field of characteristic different from 2 and 3, Bremner, Felipe and S\'anchez-Ortega in \cite{BFSO} rediscovered this third identity as part of definition of a Jordan dialgebra (although the name dialgebra alludes to two products, the authors proved that these two products could be rewritten in terms of a single one).
 Therefore, from now on we will also assume this third identity and  a (right) Jordan dialgebra will be a $\phi$-module $J$ with a bilinear operation $\bullet$ such that
\[
x\bullet(y\bullet z)=x\bullet(z\bullet y),\ (y\bullet x)\bullet x^{2}=(y\bullet x^{2})\bullet x \text{ and }(x,y^{2},z)=2(x,y,z)\bullet y.
\]
The name of Jordan dialgebra and these three defining properties were also adopted by Gubarev and Kolesnikov in \cite{TKKDialgebras}; notice that Jordan dialgebras are also known as  K-B quasi-Jordan algebras, see for example \cite{KB} or \cite{ASJ}.

The aim of this paper is to relate Leibniz algebras and Jordan distructures.  Following the ideas of A. Fern\'andez L\'opez, E. Garc\'ia and
M. G\'omez Lozano in \cite{FGGJloc}, Vel\'asquez and Felipe showed in
\cite{QJAlg} and  \cite{OnK-B} how to attach a Jordan dialgebra $L_a$ to a (right) Leibniz algebra $L$ and an element $a\in L$ with  ${\rm ad}_a^3=0$.
E. Garc\'ia and M. G\'omez Lozano, together with A. Fern\'andez L\'opez and E.  Neher, gave in \cite{FGGN} another connection of Lie algebras and Jordan pairs by defining subquotients associated to abelian inner ideals of  Lie algebras.

In order to extend this construction to (right) Leibniz algebras, the first step will be to introduce an adequate notion of Jordan pair disystem.  Bremner, Felipe and S\'anchez-Ortega  already defined Jordan triple disystems in \cite{BFSO} by using the Kolesnikov-Pozhidaev algorithm, but Jordan pair disystems have not yet appeared in the literature. After  giving their definition, which involves 4 trilinear operations, we will prove that Jordan pairs disystems naturally appear as the wings of (right) Leibniz algebras with a finite $\mathbb{Z}$-grading and, conversely, from  every Jordan pair disystem
one can build a  (right) Leibniz algebra with a short $\mathbb{Z}$-grading following  the Tits-Kantor-Koecher construction, as was done in \cite{TKKDialgebras} for Jordan dialgebras by  Gubarev and Kolesnikov.

Once the notion of Jordan pair disystem is fixed, we will  use it to define homotopes of Jordan pair disystems at elements. Given a Jordan pair disystem $(V^+,V^-)$ and an element $a\in V^{-\sigma}$ we will define a Jordan dialgebra product in $V^\sigma$ via the element $a$. Homotopes of Jordan dialgebras at elements were defined by Alhefthi, Siddiqui and Jamjoom in 2023, but they were not able to show that the  product satisfied the third axiom of Jordan dialgebra and left it as an open question in  \cite[Remark 2]{ASJ}. We will answer this question in the affirmative by proving that homotopes of Jordan pair disystems, in particular, of Jordan dialgebras, are again Jordan dialgebras.

Finally, we introduce abelian inner ideals for (right) Leibniz algebras, their kernels and their associated subquotients, and will prove that the subquotient of a (right) Leibniz algebra associated to an abelian inner ideal is a Jordan pair disystem. When the abelian inner ideal comes from an element $a\in L$ with ${\rm ad}_a^3=0$ and the kernel of the element $a$ (in the sense of Felipe and Vel\'asquez) coincides with the kernel of the abelian inner ideal $B= [[L,a],a]$, the double of the Jordan dialgebra $L_a$ is isomorphic as a Jordan pair disystem to the subquotient associated to the abelian inner ideal, as it also happens in the Lie context  \cite{FGGN}.

\section{Preliminaries}

From now on, $\phi$ will denote a ring of scalars, i.e., an associative, commutative and unital ring. We will also suppose that $\frac 12,\frac 13\in \phi$.
\begin{defn}
Let $L$
be a module over $\phi$. We say that $L$ is a\textbf{ (right) Leibniz
algebra} if there exists a bilinear product $[\text{ },\text{ }]:L\times L\longrightarrow L$
which satisfies the (right) Leibniz identity
\[
[[x,y],z]=[[x,z],y]+[x,[y,z]]
\]
for every $x,y,z\in L$.
\end{defn}

\begin{rem}
By the (right) Leibniz identity we have $[x,[y,y]]=0$ for every
$x,y\in L$, and this implies right anticommutativity, i.e.,
\[
[x,[y,z]]=-[x,[z,y]],
\]
for every $x,y,z\in L$, because $[x,[y+z,y+z]]=[x,[y,z]]+[x,[z,y]] = 0$ for every $x,y,z\in L$.
\end{rem}

 We denote
by $L^{ann}$ the submodule of $L$ spanned by elements of the form
$[x,x]$, with $x\in L$. Note that for every $x,y\in L$
\[
[x+y,x+y]=[x,x]+[x,y]+[y,y]+[y,x],
\]
so $[x,y]+[y,x]\in L^{ann}$. It is easy to check that $L^{ann}$ is a two-sided ideal of $L$, and the quotient of the (right) Leibniz algebra $L$ by the ideal $L^{ann}$ gives
a Lie algebra denoted by $\bar L$. Moreover, the ideal $L^{ann}$
is the smallest two-sided ideal of $L$ such that $L/L^{ann}$ is
a Lie algebra.


The $\phi$-module $\hat{L}=L\oplus L/L^{ann}$ with product
$$
\langle a+\bar x, b+\bar y\rangle=[a,y]-[b,x]+\overline{[x,y]}
$$
becomes a Lie algebra. Observe that $\hat L$ is the split null extension of the Lie algebra $L/L^{ann}$ by the  $(L/L^{ann}, L/L^{ann})$-bimodule $L$, see \cite{Poz}. It is well-known that $L$ is a zero-square ideal of $\hat L$; notice that the product of the Leibniz algebra $L$ can be seen, under this construction, as $[a,y]=\langle a,\bar y\rangle$ for every $a,y\in L$.

\begin{defn}\label{QJordan}
Let $L$ be a (right) Leibniz algebra over a ring of scalars $\phi$.
Given $x\in L$ we define the \textbf{adjoint operator of} $x$ as
the linear map $\text{ad}_{x}:L\longrightarrow L$ given by $\text{ad}_{x}(y)=[y,x]$
for every $y\in L$. An element $x\in L$ is said to be an \textbf{ad-nilpotent element
of index} $n\in\mathbb{N}$ if $\text{ad}_{x}^{n}(L)=0$ and $\text{ad}_{x}^{n-1}(L)\neq0$. In particular, an element $a\in L$ is said to be a \textbf{$Q$-Jordan
element} if $a$ is ad-nilpotent of index at most $3$, see \cite[Definition 27]{QJAlg}.

We will denote by $\text{End}(L)=\text{End}^{l}(L)$ the associative algebra of $\phi$-linear maps $f:L\to L$ acting on the left, i.e., $f(x)$, for every $x\in L$; we will use the notation $\text{End}^{r}(L)$ to refer to $\phi$-linear maps $g:L\to L$ acting on the right, i.e., $(x)g$, for every $x\in L$.

Throughout this paper we will denote by capital letters the adjoint maps acting on the left, i.e., $X\in\text{End}^{r}(L)$ is given by  $(y)X:=[y,x]={\rm ad}_x(y)$ for every $x,y\in L$. By the (right) Leibniz identity, for every $x,y,z\in L$, ${\rm ad}_{[x,y]}(z)=[z,[x,y]]=[[z,x],y] -[[z,y],x]=(z)(XY-YX)$; if for capital letters the bracket means  the usual antisymmetrization commutator, we have  $(z)[X,Y]=(z)(XY-YX)$ for every $z\in L$, i.e., $[X,Y]=XY-YX$.

For any $Q$-Jordan element $a\in L$, $[[[x,a],a],a]=0$, so $0={\rm ad}_{[[[x,a],a],a]}(z)=(z)[[[X,A],A],A]=(z)(XA^3-3AXA^2+3A^2XA-A^3X)$ for every $z\in L$, and therefore $$AXA^2=A^2XA\quad \hbox{ and, in particular, }\quad  A^2XA^2=0.\eqno{(1)}$$
\end{defn}

\begin{defn}
A \textbf{(right) Jordan dialgebra}
is a $\phi$-module $J$
equipped with a bilinear product $\bullet:J\times J\longrightarrow J$
verifying the following axioms:
\begin{enumerate}
\item[(JD1)] right commutativity: $x\bullet(y\bullet z)=x\bullet(z\bullet y)$,
\item[(JD2)] right Jordan identity: $(y\bullet x)\bullet x^{2}=(y\bullet x^{2})\bullet x$,
\item[(JD3)] Osborn identity: $(x\bullet y^{2})\bullet z-x\bullet(y^{2}\bullet z)=2\left(((x\bullet y)\bullet z)\bullet y-(x\bullet(y\bullet z))\bullet y\right),$
\end{enumerate}
for every $x,y,z\in J$, where $x^{2}=x\bullet x$.
\end{defn}

Analogously to the case of (right) Leibniz algebras, if $J$ is a Jordan dialgebra
we can consider  the submodule $J^{ann}$ of $J$ spanned by elements of the form
$x\bullet y-y\bullet x$, with $x,y\in J$, and $Z^{r}(J)=\{z\in L:x\bullet z=0 \text{ for all } x\in J\}$. Then $J^{ann}$ and $Z^{r}(J)$ are two-sided ideals of $J$ satisfying $J^{ann}\subseteq Z^{r}(J)$ and $Z^{r}(J)\bullet J\subseteq J^{ann}$. Moreover, the quotient algebra $\bar J:=J/J^{ann}$ is a Jordan algebra, and  $J^{ann}$ is the smallest two-sided ideal of $J$ such that $J/J^{ann}$ is a Jordan algebra.

Also the split null extension of the Jordan algebra $J/J^{ann}$ by the  $(J/J^{ann},J/J^{ann})$-bimodule $J$, $\hat J=J\oplus J/J^{ann}$, is a Jordan algebra with product
$$
(a+\bar x) \circ (b+\bar y)=a\bullet y+b\bullet x+\overline{x\bullet y},
$$
and the product of the Jordan dialgebra $J$ can be recovered from the above structure as
$$
a\bullet y=a \circ \bar y
$$
for every $a,y\in J$.

\begin{rem}\label{localRaules}
  In \cite[Theorem 36 and 37]{QJAlg} Vel\'asquez and Felipe showed how to attach a Jordan dialgebra to a (right) Leibniz algebra via a  $Q$-Jordan element:
if  $L$ is a (right) Leibniz algebra and
$a$ is a $Q$-Jordan element of $L$, then $L$ with the new product
defined by
$
x\circ y=\frac{1}{2}[x,[y,a]]
$
is a nonassociative algebra, denoted by $L^{(a)}$, such that\textup{
$
\text{ker}_{L}\{a\}:=\{z\in L:[[z,a],a]=0\}
$
}is an ideal of $L^{(a)}$. Moreover, \textup{$L_{a}:=L/\text{ker}_{L}\{a\}$}
is a Jordan dialgebra called the \textbf{Jordan dialgebra of $L$ at $a$}.
\end{rem}

\section{Jordan pair disystems}

In this section we define the concept of right Jordan pair disystem and we show that  they  naturally appear when considering the wings of (right) Leibniz algebras with a finite $\mathbb{Z}$-grading.
Let us first recall the notion of (linear) Jordan pair, due to K. Meyberg and  O. Loos, which is valid when $\frac 12,\frac 13\in \phi$.
\begin{defn}\label{JordanPair}
We say that a pair of $\phi$-modules $(V^+,V^-)$ is a (linear) Jordan pair  if there are two-trilinear maps $\{\text{ },\text{ },\text{ }\}^\sigma:V^{\sigma}\times V^{-\sigma}\times V^{\sigma}\longrightarrow V^{\sigma}$, $\sigma=\pm$, such that
for every $x,z,v\in V^{\sigma}$ and every $y,u,w\in V^{-\sigma}$,
\begin{enumerate}
  \item[(JP1)] $\{x,y,z\}^\sigma=\{z,y,x\}^\sigma$
  \item[(JP2)] $\{\{x,y,z\}^\sigma,u,v\}^\sigma = \{\{x,u,v\}^\sigma,y,z\}^\sigma - \{x, \{y,v,u\}^{-\sigma}, z\}^\sigma + \{x,y,\{z,u,v\}^\sigma\}^\sigma$.
\end{enumerate}
\end{defn}

\begin{defn}
Let $V=(V^{+},V^{-})$ be a pair of modules over $\phi$. We say
that $V$ is a (linear) \textbf{(right) Jordan pair disystem} if there
are four trilinear maps
\[
\{\text{ },\text{ },\text{ }\}_i^{\sigma}:V^{\sigma}\times V^{-\sigma}\times V^{\sigma}\longrightarrow V^{\sigma},
\]
$i=1,2$,  $\sigma = \pm$, such that for every $x,z,v\in V^{\sigma}$ and every $y,u,w\in V^{-\sigma}$, $\sigma=\pm$,
{\small \begin{enumerate}[{(JPD}1)]
\item $\{x,y,z\}_2^\sigma = \{z,y,x\}_2^\sigma,$
\item $\{w,\{x,y,z\}_1^{\sigma},u\}_1^{-\sigma} = \{w,\{x,y,z\}_2^{\sigma},u\}_1^{-\sigma}$,
\item $\{v,u,\{x,y,z\}_1^\sigma\}_1^\sigma = \{v,u,\{x,y,z\}_2^\sigma\}_1^\sigma,$
\item[(JPD4)] $\{\{x,y,z\}_1^\sigma,u,v\}_2^\sigma = \{\{x,y,z\}_2^\sigma,u,v\}_2^\sigma$,
\item[(JPD5)] $\{\{x,y,z\}_1^\sigma,u,v\}_1^\sigma = \{\{x,u,v\}_1^\sigma,y,z\}_1^\sigma - \{x, \{y,v,u\}_1^{-\sigma}, z\}_1^\sigma + \{x,y,\{z,u,v\}_1^\sigma\}_1^\sigma$,
\item[(JPD6)] $\{\{x,y,z\}_2^\sigma,u,v\}_1^\sigma = \{\{x,u,v\}_1^\sigma,y,z\}_2^\sigma - \{x, \{y,v,u\}_1^{-\sigma}, z\}_2^\sigma + \{x,y,\{z,u,v\}_1^\sigma\}_2^\sigma$,
\item[(JPD7)] $\{v,u,\{x,y,z\}_1^\sigma\}_1^\sigma = \{\{v,u,x\}_1^\sigma,y,z\}_1^\sigma - \{x, \{u,v,y\}_2^{-\sigma}, z\}_2^\sigma + \{\{v,u,z\}_1^\sigma,y,x\}_1^\sigma$,
\item[(JPD8)] $\{v,u,\{x,y,z\}_1^\sigma\}_2^\sigma = \{ \{v,u,x\}_2^\sigma,y,z\}_1^\sigma - \{x, \{u,v,y\}_1^{-\sigma},z\}_2^\sigma + \{\{v,u,z\}_2^\sigma,y,x\}_1^\sigma$.
\end{enumerate}}
\noindent In the sequel we will omit the superindices $\sigma$ in the products $\{\text{ },\text{ },\text{ }\}_1$ and $\{\text{ },\text{ },\text{ }\}_2$ when they can be easily deduced from the context.
\end{defn}

\begin{rem}\label{doubleJ}
In \cite[Definition 4.3]{BFSO} the authors introduced the notion of Jordan triple disystem $(T, \{\text{ },\text{ },\text{ }\}_1, \{\text{ },\text{ },\text{ }\}_2)$ by applying the KP-algorithm to a Jordan triple system.
They also showed in \cite[Theorem 7.3, Propositions 7.7 and 7.10]{BFSO} how to obtain a Jordan triple disystem from a Jordan dialgebra over a ring of scalars by defining
  \begin{align*}
        \{x,y,z\}_1&:=x\bullet(y\bullet z)-(x\bullet z)\bullet y+(x\bullet y)\bullet z,
        \\
        \{x,y,z\}_2& :=  (y \bullet z) \bullet x + (y \bullet x) \bullet z - y \bullet (x \bullet z).
    \end{align*}
In particular, this can be done starting with a  Jordan dialgebra arising from an associative dialgebra (see \cite{QJAlg}).
It is straightforward  to check that the double $(T,T)$ of a Jordan triple disystem is a Jordan pair disystem.

In a similar way, if one applies the  KP-algorithm to a (linear) Jordan pair and follows the arguments of \cite[Theorem 7.3, Propositions 7.7 and 7.10]{BFSO}  one arrives to the definition of Jordan pair disystem given above.
\end{rem}

\begin{defn}
    Let $V=(V^{+},V^{-})$ be a Jordan pair disystem. A pair of $\phi$-submodules
    $I=(I^{+},I^{-})$, $I^{\sigma}\subseteq V^{\sigma}$, where $\sigma=\pm$,
    is an \textbf{ideal of} $V$ if
    \[
        \{V^{\sigma},V^{-\sigma},I^{\sigma}\}_i+\{V^{\sigma},I^{-\sigma},V^{\sigma}\}_i+\{I^{\sigma},V^{-\sigma},V^{\sigma}\}_i \subseteq I^{\sigma}
    \]
    for  $i=1,2$ and $\sigma=\pm$.
    \end{defn}

\begin{prop}
    Let $V=(V^{+},V^{-})$ be a Jordan pair disystem  and let $V^{ann}=(\left(V^{ann}\right)^{+},\left(V^{ann}\right)^{-})$, where $\left(V^{ann}\right)^{\sigma}$ is the submodule of $V^{\sigma}$ generated by the elements of the form $$\{x,y,z\}_1-\{x,y,z\}_2,$$
    $x,z\in V^{\sigma}$, $y\in V^{-\sigma}$,  $\sigma=\pm$. Then
    $V^{ann}$ is an ideal of $V$ and $V/V^{ann}$  is a Jordan pair over $\phi$.
\end{prop}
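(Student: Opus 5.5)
Write $d(x,y,z):=\{x,y,z\}_1-\{x,y,z\}_2$ for the generators of $(V^{ann})^\sigma$. The plan has two parts. First show that $V^{ann}$ is an ideal: for every generator $d=d(x,y,z)$ and arbitrary elements, all three positions of both products applied to $d$ must land in $V^{ann}$. Then, since the two products agree modulo $V^{ann}$, there is a single induced product on $V/V^{ann}$, and we check (JP1) and (JP2) for it.

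\emph{Middle position.} By (JPD2), $\{w,d,u\}_1=0$. For the second product we write
$$\{w,d,u\}_2=\{w,d,u\}_1-d(w,d,u)=-d(w,d,u)\in V^{ann}.$$

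\emph{Right position.} By (JPD3), $\{v,u,d\}_1=0$, and then $\{v,u,d\}_2=-d(v,u,d)\in V^{ann}$ in the same way.

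\emph{Left position.} By (JPD4), $\{d,u,v\}_2=0$. Since $\{d,u,v\}_1=\{d,u,v\}_2+d(d,u,v)$, we get $\{d,u,v\}_1=d(d,u,v)\in V^{ann}$.

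So in every position one of the two products kills $d$, and the other differs from it by a generator of $V^{ann}$. This settles the ideal property with no real computation. Note that (JPD1) is not needed here; symmetry is handled at the quotient level.

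On the quotient, define $\{\bar x,\bar y,\bar z\}:=\overline{\{x,y,z\}_1}=\overline{\{x,y,z\}_2}$. This is well defined because $V^{ann}$ is an ideal for both products. (JP1) follows from (JPD1) applied to the second product. (JP2) follows by reading (JPD5) modulo $V^{ann}$: every bracket in it becomes the single induced product, and the identity is literally (JP2).

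The only possible obstacle is bookkeeping. One must check that each axiom (JPD2)--(JPD4) applies to the correct position and $\sigma$-component, in particular that (JPD2) covers the middle slot with the opposite sign $-\sigma$. Then the rewriting $\{\cdot\}_1-\{\cdot\}_2=d(\cdot)$ finishes each case. The axioms (JPD6)--(JPD8) are not needed for this statement.
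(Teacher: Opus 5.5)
Your proof is correct, and it is shorter than the paper's. Three of the six slot/product cases are handled identically in both arguments: $\{V^\sigma,(V^{ann})^{-\sigma},V^\sigma\}_1=0$, $\{V^\sigma,V^{-\sigma},(V^{ann})^\sigma\}_1=0$ and $\{(V^{ann})^\sigma,V^{-\sigma},V^\sigma\}_2=0$ come from (JPD2), (JPD3) and (JPD4).

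The paper treats the other three cases differently:
\begin{itemize}
\item It gets $\{V^\sigma,V^{-\sigma},(V^{ann})^\sigma\}_2=0$ from (JPD1) and (JPD4).
\item It proves $\{(V^{ann})^\sigma,V^{-\sigma},V^\sigma\}_1\subseteq (V^{ann})^\sigma$ by subtracting (JPD6) from (JPD5).
\item It proves $\{V^\sigma,(V^{ann})^{-\sigma},V^\sigma\}_2\subseteq (V^{ann})^\sigma$ by subtracting (JPD7) from (JPD8) and then invoking the previous case.
\end{itemize}

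Your single observation replaces these computations. Since $\{a,b,c\}_1-\{a,b,c\}_2$ is itself a generator of $V^{ann}$ for \emph{all} $a,b,c$, the two products agree modulo $V^{ann}$. So once one product kills $V^{ann}$ in a given slot, the other product maps $V^{ann}$ into $V^{ann}$ in that slot.

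Your route gives economy and a sharper accounting of the axioms. For this proposition you need only (JPD2)--(JPD4) for the ideal property, plus (JPD1) and (JPD5) on the quotient. So the paper's closing remark, that all eight axioms are ``actively used'' here and hence none is redundant, reflects its chosen route rather than any necessity.

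The paper's route yields one stronger fact, the exact vanishing of $\{V^\sigma,V^{-\sigma},(V^{ann})^\sigma\}_2$, which the statement does not need. It also leaves implicit the derivation of (JP1) and (JP2) on the quotient from (JPD1) and (JPD5), which you spell out.
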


\begin{proof}
Let us see that $V^{ann}$ is an ideal of $V$: take $\sigma\in \{+,-\}$ and let $x,z,v \in V^\sigma$ and $y, u \in V^{-\sigma}$.
\begin{enumerate}
    \item By (JPD3) we have that $\{0\} = \{V^\sigma, V^{-\sigma}, (V^{ann})^\sigma\}_1 \subseteq (V^{ann})^\sigma$.

    \item On the other hand, by (JPD1) and (JPD4), we can prove $$\{0\} = \{V^\sigma, V^{-\sigma}, (V^{ann})^\sigma\}_2 \subseteq (V^{ann})^\sigma.$$

    \item By (JPD2) we have that $0 = \{V^\sigma, (V^{ann})^{-\sigma}, V^{\sigma}\}_1 \subseteq (V^{ann})^\sigma$.

    \item  Using (JPD5) and (JPD6):
    \begin{align*}
        \{\{x,y,z\}_1,& u, v\}_1 - \{\{x,y,z\}_2, u,v\}_1 = \\
        &= \{\{x,u,v\}_1,y,z\}_1 - \{x, \{y,v,u\}_1,z\}_1 + \{x,y,\{z,u,v\}_1\}_1 \\
        & - \{\{x,u,v\}_1,y,z\}_2 + \{x, \{y,v,u\}_1,z\}_2 - \{x,y,\{z,u,v\}_1\}_2 \\
        &= \{\{x,u,v\}_1,y,z\}_1- \{\{x,u,v\}_1,y,z\}_2 \\
        &-(  \{x, \{y,v,u\}_1,z\}_1 - \{x, \{y,v,u\}_1,z\}_2) \\
        &+ \{x,y,\{z,u,v\}_1\}_1 - \{x,y,\{z,u,v\}_1\}_2 \in (V^{ann})^\sigma.
    \end{align*}

    \item Using (JPD7), (JPD8) and the previous item:
    \begin{align*}
        \{x,\{u,v,\,&y\}_1, z\}_2 - \{x, \{u,v,y\}_2, z\}_2 = \\
        &= \{\{v,u,x\}_2,y,z\}_1 - \{v,u,\{x,y,z\}_1\}_2 + \{\{v,u,z\}_2,y,x\}_1 \\
        &-\{\{v,u,x\}_1, y, z\}_1 + \{v,u,\{x,y,z\}_1\}_1 - \{\{v,u,z\}_1,y,x\}_1 \\
        &= -(\{\{v,u,x\}_1, y, z\}_1 - \{\{v,u,x\}_2,y,z\}_1)\\
        &+ \{v,u,\{x,y,z\}_1\}_1- \{v,u,\{x,y,z\}_1\}_2 \\
        &- (\{\{v,u,z\}_1,y,x\}_1 - \{\{v,u,z\}_2,y,x\}_1) \in (V^{ann})^\sigma.
    \end{align*}

    \item By (JPD4) we have that $\{0\} = \{(V^{ann})^\sigma, V^{-\sigma}, V^\sigma\}_2 \subseteq (V^{ann})^\sigma.$
\end{enumerate}
 Moreover, since in the quotient
$
\overline{\{x,y,z\}}_1 = \overline{\{x,y,z\}}_2
$ we have that $V/V^{ann}$  is a Jordan pair.

Notice that the eight axioms (JPD1)-(JPD8) are actively used in this proof, showing that none of them are redundant.
\end{proof}

\begin{rem}\label{rem:jp}
    Notice that $V^{ann}$ is the smallest ideal of $V$ such that the quotient $\bar V=V/V^{ann}$ is a Jordan pair. The split null extension of the Jordan pair $V/V^{ann}$ by the $(V/V^{ann},V/V^{ann})$-Jordan module $V$ is $\hat V=V\oplus V/V^{ann}$ with products
    $$
    \langle a+\bar x,\, b+\bar y,\, c+\bar z\rangle^{\sigma}=\{a,y,z\}_1^{\sigma}+\{x,b,z\}_2^{\sigma}+\{c,y,x\}_1^{\sigma}+\overline{\{x,y,z\}_1^{\sigma}}, \ \sigma=\pm
    $$
    and, as before, the products $\{\ \, , \, , \,\}_1$ and $\{\ \, , \, , \, \}_2$ of the Jordan pair disystem can be recovered from the above structure.
\end{rem}

In the following result we will show that, as it occurs with Jordan pairs and $\mathbb{Z}$-graded Lie algebras, Jordan pair disystems appear naturally when considering the wings of a (right) Leibniz algebra with a finite $\mathbb{Z}$-grading. Recall that  $L$ has a finite  $\mathbb{Z}$-grading if $L=L_{-n}\oplus...\oplus L_{0}\oplus...\oplus L_{n}$ with $[L_i,L_j]\subset L_{i+j}$ when $|i+j|\le n$ and $[L_i,L_j]=\{0\}$ otherwise.

\begin{thm}\label{AssociatedPair}
Let $L=L_{-n}\oplus...\oplus L_{0}\oplus...\oplus L_{n}$ be a (right) Leibniz algebra with a finite $\mathbb{Z}$-grading. Then $V=(L_{n},L_{-n})$,
with the triple products given by
$$\{x,y,z\}_1=[x,[y,z]], \quad \{x,y,z\}_2 = -[[y,x],z],$$
for every $x,z\in L_{\sigma n}$, $y\in L_{-\sigma n}$, $\sigma=\pm$, is a Jordan pair disystem.
\end{thm}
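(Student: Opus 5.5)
The plan is to verify the eight axioms (JPD1)--(JPD8) directly from the (right) Leibniz identity. Besides that identity, three facts will be used throughout:
\begin{enumerate}
\item[(a)] $[L_{\sigma n},L_{\sigma n}]=0$, since the degree $2\sigma n$ lies outside $[-n,n]$ (here $n\ge 1$).
\item[(b)] Right anticommutativity, $[w,[a,b]]=-[w,[b,a]]$, and more generally $[L,L^{ann}]=0$.
\item[(c)] $L^{ann}$ is a two-sided ideal.
\end{enumerate}
Fact (b) holds because $[w,[a,a]]=0$ by the remark following the definition of Leibniz algebra.

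\textbf{Step 1 (JPD1).} Take $x,z\in L_{\sigma n}$ and $y\in L_{-\sigma n}$. The Leibniz identity gives $[[y,x],z]=[[y,z],x]+[y,[x,z]]$. By (a) we have $[x,z]=0$, so $[[y,x],z]=[[y,z],x]$, which is (JPD1).

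\textbf{Step 2 (JPD2--JPD4).} The key observation is that the difference $d=\{x,y,z\}_1-\{x,y,z\}_2$ lies in $L^{ann}$. Indeed, using Step 1,
$$d=[x,[y,z]]+[[y,x],z]=[x,[y,z]]+[[y,z],x],$$
which is of the form $[a,b]+[b,a]\in L^{ann}$. The three axioms then follow at once:
\begin{enumerate}
\item[(i)] (JPD3): $\{v,u,d\}_1=[v,[u,d]]=0$, since $[u,d]\in[L,L^{ann}]=0$.
\item[(ii)] (JPD2): $\{w,d,u\}_1=[w,[d,u]]=0$, since $[d,u]\in L^{ann}$ by (c) and $[w,L^{ann}]=0$.
\item[(iii)] (JPD4): $\{d,u,v\}_2=-[[u,d],v]=0$, again because $[u,d]=0$.
\end{enumerate}

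\textbf{Step 3 (JPD5, JPD6).} These are the Jordan-pair-type identities in which the outer operator acts on the right. For (JPD5), the plan is to expand $[[x,[y,z]],[u,v]]$ using that right multiplication $\mathrm{ad}_{[u,v]}$ is a derivation:
$$[[x,[y,z]],[u,v]]=[[x,[u,v]],[y,z]]+[x,[[y,z],[u,v]]].$$
I then apply the derivation property again to $[[y,z],[u,v]]=[[y,[u,v]],z]+[y,[z,[u,v]]]$. After that, $[y,[u,v]]=-[y,[v,u]]$ by (b) produces exactly the middle term $-\{x,\{y,v,u\}_1,z\}_1$, and the first and last terms of the expansion match the remaining two terms of (JPD5).

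For (JPD6) the computation is the same, starting from $-[[[y,x],z],[u,v]]$ and letting $\mathrm{ad}_{[u,v]}$ act as a derivation twice on the nested product. Along the way I rewrite the terms with $\{\cdot,\cdot,\cdot\}_2$ using Step 1, which lets me choose whichever of $[[y,x],z]$ and $[[y,z],x]$ is convenient.

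\textbf{Step 4 (JPD7, JPD8).} I expect these to be the main obstacle, because here the new element enters on the left, as in $[v,[u,\cdot\,]]$, and the left-hand side is not governed by a derivation. The first attempt will be to rewrite the left-hand side with the Leibniz identity in the form $[a,[b,c]]=[[a,b],c]-[[a,c],b]$, so that every term becomes a right-normed expression. Concretely, for (JPD7) I would write
$$[v,[u,[x,[y,z]]]]=[[v,u],[x,[y,z]]]-[[v,[x,[y,z]]],u],$$
and then expand $[[v,u],[x,[y,z]]]$ with $\mathrm{ad}$-derivations. Terms containing $[v,x]$, $[v,z]$ or $[x,z]$ vanish by (a), which should leave exactly the three terms on the right of (JPD7).

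Bookkeeping with the signs from (b), and with the symmetry of $\{\cdot,\cdot,\cdot\}_2$ from Step 1, will be needed to identify the middle term $-\{x,\{u,v,y\}_2,z\}_2$. (JPD8) is handled in the same way, starting from $-[[u,v],[x,[y,z]]]$.
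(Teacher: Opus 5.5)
Your proposal is correct and follows essentially the paper's proof. Like the paper, you verify the axioms one by one using the Leibniz identity together with $[L_{\sigma n},L_{\sigma n}]=0$, and your plans for (JPD5)--(JPD8) go through exactly as the paper's computations do, including the vanishing of $[v,[x,[y,z]]]$ and the use of (JPD1) to match the middle terms. The only difference is packaging: you handle (JPD2)--(JPD4) through the single observation $\{x,y,z\}_1-\{x,y,z\}_2=[x,[y,z]]+[[y,z],x]\in L^{ann}$ with $[L,L^{ann}]=0$, whereas the paper uses this fact implicitly when it replaces $[x,[y,z]]$ by $-[[y,z],x]$ inside an outer bracket.
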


\begin{proof}
Let $x,z,v\in L_{\sigma n}$ and $y,u,w\in L_{-\sigma n}$, with $\sigma=\pm$.
Then it satisfies:
\begin{enumerate}
\item [(JPD1)]
   $$\{x,y,z\}_2 = -[[y,x],z] = -[[y,z],x] -[y,[x,z]] = -[[y,z],x] = \{z,y,x\}_2.$$
\item [(JPD2)]
\begin{align*}
    \{w,\{x,y,z\}_1,v\}_1&= \{w, [x,[y,z]], v \}_1 = [w,[[x,[y,z]],v]] = [w,[[[x,y],z],v]]\\
    &=-[w,[-[[y,x],z],v]] =\{w, \{x,y,z\}_2, v\}_1.
\end{align*}
\item [(JPD3)]
\begin{align*}
    \{v,u,\{x,y,z\}_1\}_1&=    [v,[u,[x,[y,z]]]] = -[v,[u,[[y,z],x]]]=\{v,u,\{z,y,x\}_2\}_1 \\&= \{v, u, \{x,y,z\}_2\}_1.
\end{align*}
\item [(JPD4)]
\begin{align*}
\{\{x,y,z\}_1, u, v\}_2 &=   -[[u,[x,[y,z]]],v] = [[u,[[y,z],x]],v]= \{\{x,y,z\}_2,u,v\}_2. \\
\end{align*}
\item [(JPD5)]
\begin{align*}
    \{\{x,&y,z\}_1, u,v \}_1 =  [[x,[y,z]],[u, v]]\\
    &=[[x,[u, v]],[y,z]]+[x,[[y,[u, v]],z]]+[x,[y,[z,[u, v]]]] \\
    &= \{\{x,u,v\}_1,y,z\}_1 -[x,[[y,[v, u]],z]]+\{x,y,\{z,u,v\}_1\}_1 \\
    & = \{\{x,u,v\}_1,y,z\}_1 - \{x, \{y,v,u\}_1,z\}_1+\{x,y,\{z,u,v\}_1\}_1.
\end{align*}

\item [(JPD6)]
\begin{align*}
    \{\{x,&y,z\}_2, u,v \}_1 =  -[[[y,x],z],[u,v]]\\
    &=-[[y,[u,v]],x],z]-[y,[x,[u,v]]],z]-[[y,x],[z,[u,v]]]\\
    & = [[y,[v,u]],x],z]+\{\{x,u,v\}_1, y, z\}_2+ \{x,y,\{z,u,v\}_1\}_2 \\
    &=  - \{x, \{y,v,u\}_1, z\}_2+\{\{x,u,v\}_1, y, z\}_2 + \{x,y,\{z,u,v\}_1\}_2.
\end{align*}

\item [(JPD7)]
\begin{align*}
    \{v,&u,\{x,y,z\}_1\}_1 =  [v,[u,[x,[y,z]]]]=[[v,u],[x,[y,z]]]\\
    &=([v,u])X[Y,Z]-([v,u])[Y,Z]X \\
    &=([v,u])X[Y,Z]-([v,u])YZX+([v,u])ZYX\\
    &=[[[v,u],x]],[y,z]]-([v,u])YZX+([v,u])Z[Y,X]\\
    &=[[v,[u,x]],[y,z]]-([v,u])YXZ+[[[v,u],z]],[y,x]]\\
    &=\{\{v,u,x\}_1,y,z\}_1-[[[v,u],y],x],z]+[[v,[u,z]],[y,x]] \\&=
   \{\{v,u,x\}_1,y,z\}_1-\{x,\{u,v,y\}_2,z\}_2+\{\{v,u,z\}_1,y,x\}_1
\end{align*}

\item [(JPD8)]
\begin{align*}
    \{v,&u,\{x,y,z\}_1\}_2 =-[[u,v],[x,[y,z]]]\\
    &=-[[[u,v],x],[y,z]]+[[[[u,v],y],z],x]-[[[[u,v],z],y],x]\\&=-[[[u,v],x],[y,z]]+[[[[u,v],y],z],x]-[[[u,v],z],[y,x]]\\
    &=\{ \{v,u,x\}_2,y,z\}_1 - \{z, \{u,v,y\}_1,x\}_2 + \{\{v,u,z\}_2,y,x\}_1
\end{align*}

\end{enumerate}
Therefore, $V=(L_{n},L_{-n})$ is a Jordan pair disystem.
\end{proof}

\begin{rem}\label{rem:wingsLeibniz}
    Let $L=L_{-n}\oplus...\oplus L_{0}\oplus...\oplus L_{n}$ be a (right) Leibniz algebra with a finite $\mathbb{Z}$-grading.  Notice that $L^{ann}$ is $\mathbb{Z}$-graded ideal of $L$, and therefore both $\bar L=L/L^{ann}$ and the split null extension $\hat L=L\oplus L/L^{ann}$ are also  $\mathbb{Z}$-graded Lie algebras with the same support. Hence, the wings $(\hat L_{-n}, \hat L_n)$ of $\hat L$ have the structure of Jordan pair with the products
    $$
        \langle a+ \bar x, b+ \bar y, c+ \bar z \rangle^{\sigma}=\langle  a+ \bar x, \langle b+ \bar y ,  c+ \bar z \rangle\rangle
    $$
    for every $a+ \bar x, c+ \bar z\in \hat L_{\sigma n}$ and every  $b+ \bar y\in \hat L_{-\sigma n}$, $\sigma=\pm$.
    The products $\{\ \, , \, ,\,  \}_1$ and $\{\ \, , \, ,\, \}_2$ defined in Theorem \ref{AssociatedPair}  can be recovered from the above structure:

    \begin{align*}
    \{a,y,z\}_1&=[a,[y,z]]=\langle a, \overline{[y,z]}\rangle =\langle  a, \langle\bar y,\, \bar z\rangle\rangle=\langle a,\, \bar y,\,\bar z  \rangle^\sigma,\\
    \{x,b,z\}_2 &=-[[b,x],z]=-[[b,z],x] \hbox{ (by the $\mathbb{Z}$-grading)}= \langle \bar x, [b,z]\rangle\\
    & =\langle  \bar x, \langle b, \bar z \rangle\rangle=\langle\bar x, b, \bar z \rangle^\sigma.
    \end{align*}

    These relations  provide an alternative way of proving that $(L_{-n},L_n)$ satisfies the identities of Jordan pair disystem. For example, if  $x,z,v\in L_{\sigma n}$ and $y,u,w\in L_{-\sigma n}$,  $\sigma=\pm$, then
    \begin{enumerate}
      \item [(JPD3)]
        \begin{align*}
            \{v,u,\{x,y,z\}_1\}_1&= \langle v, \bar u, \overline{\{x,y,z\}}_1 \rangle^\sigma=\langle v, \bar u, \overline{\{x,y,z\}}_2 \rangle^\sigma\\
            &= \{v, u, \{x,y,z\}_2\}_1
        \end{align*}
        where we have used that $\overline{\{x,y,z\}}_1=\overline{\{x,y,z\}}_2$.

    \item [(JPD6)]
    \begin{align*}
        \{\{x,&y,z\}_2, u,v \}_1 =  \langle \langle \bar x, y,  \bar z\rangle , \bar u, \bar  v\rangle^\sigma = \langle \langle  \bar  x,  \bar u, \bar v\rangle ,y ,\bar z\rangle^\sigma
        \\
    &-\langle \bar x,\langle y, \bar v, \bar u\rangle , \bar z\rangle^\sigma + \langle \bar x,y,\langle \bar z, \bar u, \bar v\rangle \rangle^\sigma
        \\
    &=\langle  \overline{\{x,u,v\}}_1, y, \bar z\rangle^\sigma - \{x,\{y,v,u\}_1,z\}_2 + \langle  \bar x, y,  \overline{\{z,u,v\}}_1\rangle^\sigma
        \\
    &=  \{\{x,u,v\}_1, y, z\}_2- \{x, \{y,v,u\}_1, z\}_2 + \{x,y,\{z,u,v\}_1\}_2
    \end{align*}
    where we have used that  $(\hat L_{-n},\hat L_n)$ satisfies (JP2) of Definition \ref{JordanPair}.
    \end{enumerate}
\end{rem}

\begin{example}
  The following example of a Leibniz algebra with finite $\mathbb{Z}$-grading is due to Liu and can be found in \cite[Example 31]{OnK-B}:
Let $\phi$ be a field and consider the $\phi$-span of $\{e,f,h,u,v,w\}$ with products
$$  \begin{array}{llll}
    \,[h,e]=2e  & [h,f]=-2f  & [e,h]=-2e & [e,f]=h  \\
    \,[f,h]=2f & [f,e]=-h  & [u,h]=-2u & [u,f]=-v \\
    \,[v,e]=-2u & [v,f]=-w & [w,h]=2w & [w,e]=-2v
     \end{array}
$$
and the rest of the products are zero. It is easy to check that $L^{ann}={\rm span}_{\phi}\{u,v,w\}$. Moreover,
 $L$  has the following 3-grading
$$
L_{-1}={\rm span}_{\phi}\{f,w\}, \qquad L_0={\rm span}_{\phi}\{h,v\},\qquad L_{1}={\rm span}_{\phi}\{e,u\}
$$
By Theorem \ref{AssociatedPair} $(L_{-1},L_{1})$ is a Jordan pair disystem. Notice that $$(L_{-1},L_{1})^{ann}=(L^{ann}\cap L_{-1}, L^{ann}\cap L_{1})=({\rm span}_{\phi}\{w\}, {\rm span}_{\phi}\{u\})$$ because
$$
\{w,e,f\}_1=[w,[e,f]]=[w, h ]=2w\quad\quad \{w,e,f\}_2=-[[e,w],f]=0.
$$
$$
\{u,f,e\}_1=2u \qquad \qquad \{u,f,e\}_2=0.
$$
\end{example}

\section{The Tits-Kantor-Koecher construction}

In this section we will show how to recover a Leibniz algebra with a short $\mathbb{Z}$-grading from a Jordan pair disystem.

\begin{defn}
    Let $V = (V^+, V^-)$ be a (right) Jordan pair disystem. Associated to $x\in V^\sigma$ and $y\in V^{-\sigma}$, where $\sigma=\pm$, we define the element $$[x,y]\in \text{End}^r(V^+)\times \text{End}^r(V^-)\times\text{End}^{l}(V^+)\times\text{End}^{l}(V^-)$$
   where for $z\in V^\sigma$ and $u\in V^{-\sigma}$ we have
\begin{align*}
 [x,y](z):=\{x,y,z\}_1,&\qquad  (z)[x,y]:=-\{z,y,x\}_1,\\
  [x,y](u):=-\{y,x,u\}_2,& \qquad  (u)[x,y]:=\{u,x,y\}_1.
\end{align*}
 Let $D(V)$ be the $\phi$-module generated by all $[x,y]$ where  $x \in V^\sigma$, $y \in V^{-\sigma}$,  $\sigma=\pm$.
\end{defn}

\begin{thm}\label{thm:TKK}
Let $V = (V^+, V^-)$ be a (right) Jordan pair disystem. Consider  the $\phi$-module
$$
    \mathrm{TKK}(V):= V^- \oplus D(V) \oplus V^+
$$
with product
$$[x+G+y,z+H+u]:=((x)H+G(z))+([x,u]+[G,H]+[y,z])+(G(u)+(y)H)$$
 for every $x,z\in V^\sigma$ and $y,u\in V^{-\sigma}$, $G,H\in D(V)$
where $[G,H]$ is extended by linearity from
\begin{align*}&[[x,y],[z,u]]:=-[\{x,u,z\}_1,y]+[x,\{y,z,u\}_1],\\
&[[x,y],[u,z]]:=-[[x,y],[z,u]].
\end{align*}
Then $\mathrm{TKK}(V)$
 is a $\mathbb{Z}$-graded (right) Leibniz algebra with $L_1 = V^+$,  $L_0 = D(V)$, $L_{-1} = V^-$ and $L_k = \{0\}$ for any $|k| > 1$.  In particular, $D(V)$ is a (right) Leibniz algebra.
\end{thm}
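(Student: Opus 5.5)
The plan is to work throughout with the concrete realization of $D(V)$ inside $\mathrm{End}^r(V^+)\times\mathrm{End}^r(V^-)\times\mathrm{End}^{l}(V^+)\times\mathrm{End}^{l}(V^-)$. Elements of $D(V)$ are then honest quadruples of operators, not formal symbols. This makes well-definedness of $[G,H]$ the first genuine issue, since $[G,H]$ was only prescribed on generators $[x,y]$.

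\textbf{Step 1: operator formulas for $[G,H]$.} I will show that for generators $G=[x,y]$ and $H=[z,u]$ the prescribed element $[G,H]$ acts as follows, for every $w$ in $V^+$ or $V^-$:
\begin{align*}
(w)[G,H]&=((w)G)H-((w)H)G,\\
[G,H](w)&=(G(w))H+G(H(w)).
\end{align*}
These are exactly what the right Leibniz identity forces, namely $\mathrm{ad}_{[G,H]}=\mathrm{ad}_G\mathrm{ad}_H-\mathrm{ad}_H\mathrm{ad}_G$ and $[[G,H],w]=[[G,w],H]+[G,[H,w]]$.

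Expanding both sides with the definitions of $[x,y](z)$, $(z)[x,y]$, etc., each of the four components should reduce to one of the axioms. The components on $V^\sigma$ correspond to (JPD5) and (JPD7). The components on $V^{-\sigma}$ correspond to (JPD6) and (JPD8), with (JPD1)--(JPD4) used to swap $\{\ \}_1$ and $\{\ \}_2$ inside outer products. I also need the analogous check for $[[x,y],[u,z]]:=-[[x,y],[z,u]]$; this uses the right-anticommutativity already built into the definition, $(w)[x,y]$ versus $(w)[y,x]$.

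Once Step 1 holds, the right-hand sides depend only on the operators $G,H$. Hence $[G,H]$ is well defined on all of $D(V)$ by bilinearity, and the formulas continue to hold for arbitrary $G,H\in D(V)$. The same step confirms that $[G,H]\in D(V)$, so the product lands in $\mathrm{TKK}(V)$ and respects the grading $L_{-1}=V^-$, $L_0=D(V)$, $L_1=V^+$. Brackets of total degree $\pm2$ vanish by definition of the product.

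\textbf{Step 2: the Leibniz identity by homogeneous cases.} I will verify $[[a,b],c]=[[a,c],b]+[a,[b,c]]$ for homogeneous $a,b,c$, grouping cases by degree.
\begin{enumerate}
\item[(i)] Cases where all three elements lie in $D(V)$, or exactly one lies in $V^\pm$: these are immediate from the Step 1 formulas together with associativity of composition.
\item[(ii)] Cases with two elements in $V^{\pm}$ and one in $D(V)$: these also follow from Step 1, after writing $[x,u]$ as an element of $D(V)$.
\item[(iii)] Cases with three elements in $V^{\pm}$: when the total degree is $\pm3$ both sides vanish. When the total degree is $\pm1$ the identity becomes a trilinear identity in the pair. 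For example, $[[x,y],z]=[x,y](z)=\{x,y,z\}_1$ for $x,z\in V^\sigma$ must equal $[[x,z],y]+[x,[y,z]]=0+(x)[y,z]=\{x,y,z\}_1$. The other orderings reduce similarly to the definitions together with (JPD1).
\end{enumerate}
Finally, $D(V)=L_0$ is a subalgebra, so it is itself a (right) Leibniz algebra.

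\textbf{Main obstacle.} I expect Step 1 to be the hardest part, and within it the left-action components. There the two summands $(G(w))H$ and $G(H(w))$ mix $\{\ \}_1$ and $\{\ \}_2$ across both $\sigma$ and $-\sigma$. Matching them with (JPD7)/(JPD8) will require careful use of (JPD1)--(JPD4) to replace inner $\{\ \}_2$ by $\{\ \}_1$ wherever the result is inserted into a $\{\ \}_1$-slot. If the direct computation becomes unwieldy, I would instead pass to the split null extension of Remark~\ref{rem:jp}. There one can use the classical TKK construction for the Jordan pair $\hat V$ and read off the identities componentwise, as in Remark~\ref{rem:wingsLeibniz}.
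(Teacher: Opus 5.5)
Your route differs from the paper's. The paper never checks identities in $D(V)$ directly. It realizes each $T=[x,y]$ inside the TKK Lie algebra $\hat L$ of the Jordan pair $\hat V=V\oplus V/V^{ann}$ of Remark~\ref{rem:jp}: $T$ becomes $\langle x,\overline{y}\rangle$ when it stands on the left and $\langle \overline{x},\overline{y}\rangle$ when it stands on the right, so every right-hand argument passes through the quotient. The Leibniz identity is then one application of Jacobi in $\hat L$; this is exactly your fallback.

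Your Step~1 and your well-definedness argument are sound, but the axiom bookkeeping is off. For $G=[x,y]$, $H=[z,u]$:
\begin{itemize}
\item both right components follow from (JPD5) alone;
\item the left component on $V^\sigma$ follows from (JPD5) with (JPD1), (JPD3);
\item the left component on $V^{-\sigma}$ follows from (JPD6) with (JPD1), (JPD4).
\end{itemize}
(JPD7) and (JPD8) never enter Step~1.

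The real gap is in Step~2, where you claim cases (i) and (ii) follow formally from Step~1. They do not. Take $a=G=[x,y]$, $b=v\in V^\sigma$, $c=w\in V^{-\sigma}$. The identity reads $[G(v),w]=[G(w),v]+[G,[v,w]]$ in $D(V)$. Two of the three terms are fresh generators whose actions come from the defining formulas, not from Step~1. Evaluate the left actions at $t\in V^\sigma$, using Step~1 only for the last term. You must then show
\[
\{\{x,y,v\}_1,w,t\}_1=\{x,y,\{v,w,t\}_1\}_1+\{v,\{y,x,w\}_2,t\}_2-\{\{x,y,t\}_1,w,v\}_1,
\]
which is precisely (JPD7). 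At $s\in V^{-\sigma}$ you get (JPD8) together with (JPD1), (JPD4). The right components need (JPD5) together with (JPD1)--(JPD3). This configuration carries the substance of the theorem, and your plan dismisses it.

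There is a second omission of the same kind. After Step~1 and the defining formula for $[G,H]$, three cases reduce as follows:
\begin{itemize}
\item the configuration $(G,w,H)$ reduces to $G\bigl(H(w)+(w)H\bigr)=0$;
\item the configuration $(v,G,w)$ reduces to $[v,\,G(w)+(w)G]=0$ in $D(V)$;
\item the left components of $(G,H,F)$ reduce to $G\bigl(F(H(w))+(H(w))F\bigr)=0$.
\end{itemize}
None of these follows from associativity of composition. You need a separate lemma playing the role of $[L,L^{ann}]=0$:
\begin{enumerate}
\item[(a)] $H(w)+(w)H\in V^{ann}$ for every $H\in D(V)$, by (JPD1);
\item[(b)] the left action of every element of $D(V)$ annihilates $V^{ann}$, by (JPD3) on $V^\sigma$ and by (JPD1), (JPD4) on $V^{-\sigma}$;
\item[(c)] $[v,a]=0$ in $D(V)$ for $v\in V^\sigma$ and $a\in (V^{ann})^{-\sigma}$, by (JPD2), (JPD3), (JPD4).
\end{enumerate}
Item (b) is also what the case $H=[u,z]$ of Step~1 needs on the left components; right anticommutativity only covers the right ones.

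With this lemma and an explicit treatment of the $(G,v,w)$ configuration via (JPD7), (JPD8), your argument goes through. It has the merit of making the well-definedness of $[G,H]$ on $D(V)$ explicit, which the paper leaves implicit. The paper's route instead hides all of this case analysis inside the assertion that $\hat V$ is a Jordan pair.
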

\begin{proof}
Let $\hat V  = V \oplus V/V^{ann} = ( \hat V^+, \hat V^-)$ be the Jordan pair with products $\langle\ ,\ ,\  \rangle^{\sigma}$, $\sigma=\pm$, defined in Remark \ref{rem:jp}. Let $\hat L = \hat L _{-1} \oplus \hat L_0 \oplus \hat L_1$ be the TKK Lie algebra of $\hat V$ with product denoted by $\langle \ , \ \rangle$.

Let us prove that, as in Remark \ref{rem:wingsLeibniz}, we can recover the product of $\mathrm{TKK}(V)$ from that of the Lie algebra $\hat L$: for every $x,z,v\in V^\sigma$ and $y,u,w\in V^{-\sigma}$, $\sigma=\pm$, we have
       \begin{align*}
      [x,z]&=\langle x , \overline z \rangle=0,\\
      [z,[x,y]]&=(z)[x,y]=-\{z,y,x\}_1=-\langle z, \overline y,\overline x\rangle^\sigma=-\langle z,\langle \overline
      y,\overline x\rangle\rangle=\langle z,\langle \overline x,\overline y\rangle\rangle,\\
      [u,[x,y]]&=(u)[x,y]=\{u,x,y\}_1=\langle u, \overline x,\overline y\rangle^{-\sigma}=\langle u,\langle \overline x,\overline y\rangle\rangle,\\
      [[x,y],z]&=[x,y](z)=\{x,y,z\}_1=\langle x, \overline y,\overline z\rangle^\sigma=\langle x,\langle \overline y,\overline z\rangle\rangle=\langle \langle x,  \overline y\rangle,\overline z\rangle,\\
      [[x,y],u]&=[x,y](u)=-\{y,x,u\}_2=-\langle \overline y,  x,\overline u\rangle^{-\sigma}=-\langle \langle\overline y,  x\rangle,\overline u\rangle=\langle \langle x,  \overline y\rangle,\overline u\rangle,\\
      [[x,y],[z,u]]&=-[\{x,u,z\}_1,y]+[x,\{y,z,u\}_1]=-\langle \{x,u,z\}_1,\overline y\rangle+\langle x,\overline{\{y,z,u\}_1}\rangle\\&=
      -\langle \langle x,\langle \overline u,\overline z\rangle\rangle,\overline y\rangle+\langle x,\langle \overline y,\langle \overline z,\overline u\rangle\rangle\rangle=\langle \langle x,\langle \overline z,\overline u\rangle\rangle,\overline y\rangle+\langle x,\langle \overline y,\langle \overline z,\overline u\rangle\rangle\rangle\\&=\langle \langle x,\overline y\rangle,\langle \overline z,\overline u\rangle\rangle.
    \end{align*}
The calculations above show that any $T=[x,y]\in D(V)$ gives rise to two elements in $\hat L$:  $T'=\langle x,\overline y\rangle$ or $\overline{T'}=\langle \overline x,\overline y\rangle$
 depending on its position on the left or on the right in the product. Extend this notation  to $D(V)$ by linearity.

 Let us prove that $\text{TKK}(V)$ is a Leibniz algebra:
      for $x,z,v\in V^\sigma$, $y,u,w\in V^{-\sigma}$ and $F,G,H\in D(V)$ we have
    \begin{align*}
      &[[x+G+y,z+H+u],v+F+w]=\langle\langle x+G'+y,\overline{z}+\overline{H'}+\overline{u}\rangle, \overline{v}+\overline{F'}+\overline{w}\rangle\\&=
      \langle\langle x+G'+y,\overline{v}+\overline{F'}+\overline{w}\rangle, \overline{z}+\overline{H'}+\overline{u}\rangle+\langle x+G'+y,\langle\overline{z}+\overline{H'}+\overline{u}, \overline{v}+\overline{F'}+\overline{w}\rangle\rangle\\&
      =[[x+G+y,v+F+w],z+H+u]+[x+G+y,[z+H+u,v+F+w]].
    \end{align*}
\end{proof}

This construction generalizes the  Tits-Kantor-Koecher construction for Jordan dialgebras  given by Gubarev and Kolesnikov in \cite{TKKDialgebras}. Since their construction was done for left structures, we first need  to relate left and right Leibniz algebras and left and right Jordan dialgebras. Indeed, if $L$ and $J$ are a left Leibniz algebra and a left Jordan dialgebra with product $[\ ,\ ]$ and $\bullet$ respectively, then the $\phi$-modules $L$ and $J$   with  products $[x,y]_{op}= [y,x]$ and $x \bullet_{op} y = y \bullet x$ are a right Leibniz algebra and a right Jordan dialgebra respectively, denoted by $L^{op}$ and $J^{op}$.

Let $J$ be a right Jordan dialgebra, let $J^{op}$ be the corresponding left Jordan dialgebra and  consider  the (left) Leibniz algebra $T(J^{op})=J^+\oplus S_0(J)\oplus J^-$, where $S_0(J)$ is spanned by operators $L_{x\bullet y}$ and $[L_x,L_y]$ (see \cite[\S 5.1]{TKKDialgebras}) and  with product
$[\ ,\ ]_t$ given in  \cite[\S 6.1]{TKKDialgebras}.
In the notation of \cite{TKKDialgebras}, $ [y,x]_t = L_{y \bullet_{op} x} + [L_y, L_x] = L_{x\bullet y} + [L_y,L_x]$, $[y,x]_t^* =-L_{x\bullet y} + [L_y,L_x]$ and
\begin{align*}
    L_{x\bullet y} \vdash z & = (x \bullet y) \bullet_{op} z = z \bullet (x \bullet y)
    \\
    L_{x\bullet y} \dashv z & = z \bullet_{op} (x \bullet y) = (x \bullet y) \bullet z
    \\
    [L_y,L_x] \vdash z & = y\bullet_{op} (x \bullet_{op} z) - x \bullet_{op} (y \bullet_{op} z) = (z \bullet x) \bullet y - (z \bullet y) \bullet x
    \\
    [L_y,L_x] \dashv z & = y \bullet_{op} (z \bullet_{op} x) - (z \bullet_{op} y) \bullet_{op} x = (x \bullet z) \bullet y - x \bullet ( y \bullet z)
\end{align*}
for any $x, z \in J^+$ and $y \in J^-$. Thus, if we rewrite  some of the products of \cite[Theorem 25]{TKKDialgebras} in terms of the Jordan product $\bullet$ we get:
\begin{align*}
    [z,[x,y]_{op}]_{op} &= [[y,x]_t,z]_t = [y,x]^*_t \vdash z = (-L_{x\bullet y} + [L_y,L_x]) \vdash z
    \\
    &=-z \bullet (x \bullet y) + (z \bullet x) \bullet y - (z \bullet y) \bullet x
    \\ & \\
    [u,[x,y]_{op}]_{op} &= [[y,x]_t,u]_t = [y,x]_t \vdash u = (L_{x\bullet y} + [L_y,L_x]) \vdash u
    \\
    &= u \bullet (y \bullet x) + (u \bullet x) \bullet y - (u \bullet y) \bullet x
    \\ & \\
    [[x,y]_{op},z]_{op} &= [z,[y,x]_t]_t = -[y,x]^*_t \dashv z =(L_{x\bullet y} - [L_y,L_x])\dashv z
    \\
    &=(x \bullet y) \bullet z - (x \bullet z) \bullet y + x \bullet ( y \bullet z)
    \\ & \\
    [[x,y]_{op},u]_{op} &= [u,[y,x]_t]_t = -[y,x]_t\dashv u = (-L_{x\bullet y} - [L_y, L_x]) \dashv u
    \\
    &=-(x \bullet y) \bullet u - (x \bullet u) \bullet y + x \bullet (y \bullet u)
\end{align*}
for any $x, z \in J^+$ and $y, u \in J^-$.

\begin{cor}
    Let $J$ be a right Jordan dialgebra and let us consider the (right) Jordan pair disystem $V=(J,J)$. Then
    $ {\rm TKK}(V)$ is isomorphic as a (right) Leibniz algebra to the (right) Leibniz subalgebra $L=J^+\oplus ([J^+,J^-]+[J^-,J^+])\oplus J^-$ of $T(J^{op})^{op}=J^+\oplus S_0(J)\oplus J^-$ generated by $J^+$ and $J^-$.
\end{cor}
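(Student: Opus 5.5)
We define $\Psi:{\rm TKK}(V)\to L$ as the identity on the wings, $V^+=J\to J^+$ and $V^-=J\to J^-$. On the degree-zero part we set
$$\Psi([x,y]) := [x,y]_{op}\quad\text{for } x\in V^+,\ y\in V^-,\qquad \Psi([y,x]) := [y,x]_{op}\quad\text{for } y\in V^-,\ x\in V^+,$$
where $[\ ,\ ]_{op}$ is the product of $T(J^{op})^{op}$. We extend $\Psi$ linearly. The plan has four steps: check that $\Psi$ is well defined on $D(V)$, check that it is injective there, check that it preserves products, and conclude that it is bijective onto $L$.

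First we make the Jordan pair disystem $V=(J,J)$ explicit. Its products are those of Remark \ref{doubleJ}:
$$\{x,y,z\}_1=x\bullet(y\bullet z)-(x\bullet z)\bullet y+(x\bullet y)\bullet z,\qquad \{x,y,z\}_2=(y\bullet z)\bullet x+(y\bullet x)\bullet z-y\bullet(x\bullet z).$$
We compare these with the four displayed formulas obtained from \cite[Theorem 25]{TKKDialgebras}.

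\begin{itemize}
\item The action of $[x,y]$ on the left of $z\in V^+$ is $\{x,y,z\}_1$. This agrees with $[[x,y]_{op},z]_{op}=(x\bullet y)\bullet z-(x\bullet z)\bullet y+x\bullet(y\bullet z)$.
\item The right action on $z$ is $(z)[x,y]=-\{z,y,x\}_1$. Expanding gives $-z\bullet(y\bullet x)+(z\bullet x)\bullet y-(z\bullet y)\bullet x$. This agrees with $[z,[x,y]_{op}]_{op}$ after replacing $y\bullet x$ by $x\bullet y$ inside a right factor, which (JD1) allows.
\item The action on $u\in V^-$ is $-\{y,x,u\}_2=-(x\bullet u)\bullet y-(x\bullet y)\bullet u+x\bullet(y\bullet u)$. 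This agrees with $[[x,y]_{op},u]_{op}$.
\item The right action on $u$ is $(u)[x,y]=\{u,x,y\}_1$, which agrees with $[u,[x,y]_{op}]_{op}$.
\end{itemize}

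The case $[y,x]$ with $y\in V^-$ follows symmetrically, using that $V$ is the double $(J,J)$.

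From these identities we deduce well-definedness and injectivity on $D(V)$. An element $G=\sum[x_i,y_i]$ of $D(V)$ is, by definition, the quadruple of operators it induces on $V^\pm$. The element $\Psi(G)$ is determined, inside $S_0(J)$, by its $\vdash$ and $\dashv$ actions on $J^\pm$. These are exactly the adjoint actions of $L_0$ on $L_{\pm1}$ from both sides, and they coincide with the four operators of $G$.

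There is one point where this could fail: if elements of $S_0(J)$ were not determined by their action on $J^\pm$. In that case we would need the explicit description of $S_0(J)$ as operators given in \cite[\S 5.1]{TKKDialgebras}, where these elements are defined as pairs of operators via $\vdash$ and $\dashv$, so that faithfulness is built in. Granting this, $\Psi(G)=0$ if and only if $G=0$.

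Next we check that $\Psi$ is a homomorphism. The homogeneous products with at least one factor in a wing are exactly the four identities compared above. We also need $[x,z]=0$ for $x,z$ in the same wing, and this holds in $T(J^{op})$ by the grading.

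For the product $[[x,y],[z,u]]=-[\{x,u,z\}_1,y]+[x,\{y,z,u\}_1]$ in $D(V)$ we do not compute in $S_0(J)$. Instead we use the (right) Leibniz identity in $L$:
$$[[x,y]_{op},[z,u]_{op}]_{op}=[[[x,y]_{op},z]_{op},u]_{op}-[[[x,y]_{op},u]_{op},z]_{op}.$$
By the wing identities already established, the right-hand side is the image under $\Psi$ of $[\{x,y,z\}_1,u]-[\{y,x,u\}_2\text{-term},z]$. It remains to match this expression with $\Psi$ of the defining formula. We do that by comparing operator actions on $J^\pm$, using the injectivity shown above together with the fact that ${\rm TKK}(V)$ is Leibniz (Theorem \ref{thm:TKK}), so the same Leibniz expansion holds in ${\rm TKK}(V)$.

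Finally, the image of $\Psi$ is $J^+\oplus([J^+,J^-]_{op}+[J^-,J^+]_{op})\oplus J^-$, which is exactly $L$. So $\Psi$ is bijective onto $L$. The step we expect to be the main obstacle is the sign and argument-order bookkeeping in the four comparisons above, in particular tracking the op-reversals between $[y,x]_t$, $[y,x]_t^*$ and $[x,y]_{op}$.
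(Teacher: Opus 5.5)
Your proposal is correct and is essentially the paper's proof. You use the same map $\Psi$ (the identity on the wings and $[x,y]\mapsto[x,y]_{op}$), get bijectivity because elements of $D(V)$ and of $S_0(J)$ are determined by their actions on $J^\pm$, and get the homomorphism property by matching the four displayed products with $\{\ ,\ ,\ \}_1$ and $\{\ ,\ ,\ \}_2$ of the double $(J,J)$. Your Leibniz-identity argument for the $[D(V),D(V)]$ products makes explicit a step the paper leaves implicit, while, as in the paper, the $[J^-,J^+]$ case and the faithfulness of $S_0(J)$ on the wings are taken from the construction in \cite{TKKDialgebras}.
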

\begin{proof} Consider the map
    \begin{align*}
       \Psi: {\rm TKK}(V)  & \longrightarrow J^+\oplus ([J^+,J^-]+[J^-,J^+])\oplus J^-
        \\
        x  & \mapsto x
        \\
       [ x, y] & \mapsto [x, y]_{op},
    \end{align*}
    for every $x \in V^\sigma$ and $y \in V^{-\sigma}$. By definition of the products in ${\rm TKK}(V)$ and in $L$, $\Psi$ is bijective.  Moreover, by Proposition \ref{doubleJ}
    \begin{align*}
        \{x,y,z\}_1&= x\bullet(y\bullet z)-(x\bullet z)\bullet y+(x\bullet y)\bullet z,
        \\
        \{x,y,z\}_2&= (y \bullet z) \bullet x + (y \bullet x) \bullet z - y \bullet (x \bullet z),
    \end{align*}
    and by the  above calculations  $\Psi$ is a homomorphism of (right) Leibniz algebras.
\end{proof}

\section{Homotopes of (right) Jordan pair disystems}

\begin{defn}
Let $V=(V^+,V^-)$ be a (right) Jordan pair disystem and let $a\in V^{-\sigma}$. In the $\phi$-module $V^\sigma$ we define the following product
$$
x\bullet y=\{x,a,y\}_1
$$
for every $x,y\in V^{\sigma}$. We denote $V^{(a)}$ the $\phi$-module $V^\sigma$ with this new product $\bullet$ and call it the {\bf homotope of $V$ at the element $a$}.
 \end{defn}

This product has already been introduced in the context of Jordan dialgebras by Alhefthi, Siddiqui and Jamjoom in 2023, see \cite[\S 3]{ASJ}. In that paper, given a Jordan dialgebra $J$  the authors considered the trilinear product  $\{\ ,\ , \ \}_1$ (see  formula (2) in \cite[pag. 3]{ASJ}) and for every $a\in J$ they defined a bilinear product $\bullet$ in $J$ by $x\bullet y=\{x,a,y\}_1$, calling the resulting $(J,\bullet)$ the $a$-homotope of $J$. In \cite[Proposition 6]{ASJ} they showed that the $a$-homotope of a Jordan dialgebra satisfied axioms (JD1) and (JD2) of the definition of Jordan dialgebra. Nevertheless, they were not able to show axiom (JD3) and left it as an open problem in \cite[Remark 2]{ASJ}.

In this section we will show that $V^{(a)}$ is a Jordan dialgebra, proving in particular that the homotope of $J$ defined by Alhefthi, Siddiqui and Jamjoom is in fact a Jordan dialgebra.

\begin{prop}
Given a  (right) Jordan pair disystem and an element $a\in V^{-\sigma}$, the homotope of $V$ at $a$ is a Jordan dialgebra.
\end{prop}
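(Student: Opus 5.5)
Rather than verifying (JD1)--(JD3) directly from the eight axioms (JPD1)--(JPD8), we will pass through the Jordan pair $\hat V=V\oplus V/V^{ann}$ of Remark \ref{rem:jp}, in the same way that Remark \ref{rem:wingsLeibniz} and the proof of Theorem \ref{thm:TKK} recover the disystem products from a Lie/Jordan structure. The classical fact we will use is that the homotope of a linear Jordan pair at an element of the opposite space is a linear Jordan algebra (Meyberg/Loos; this uses $\frac12,\frac13\in\phi$). The plan is to realize $V^{(a)}$ as the Jordan dialgebra attached to a suitable Jordan algebra and an ideal.

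\textbf{Step 1.} Take $\bar a\in \hat V^{-\sigma}$ and form the homotope $\hat J:=\hat V^{(\bar a)}$. This is $\hat V^\sigma=V^\sigma\oplus V^\sigma/(V^{ann})^\sigma$ with product
$$p\circ q=\tfrac12\langle p,\bar a,q\rangle^\sigma .$$
We use the normalization with the factor $\frac12$ so that the Jordan identity is the usual one; equivalently, take $\langle p,\bar a,q\rangle^\sigma$ and rescale. By the classical result, $\hat J$ is a Jordan algebra.

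\textbf{Step 2.} Compute the product using Remark \ref{rem:jp}. For $b,c\in V^\sigma$,
\begin{align*}
\langle b+\bar x,\bar a,c+\bar z\rangle^\sigma&=\{b,a,z\}_1+\{c,a,x\}_1+\overline{\{x,a,z\}_1}.
\end{align*}
In particular $\langle b,\bar a,\bar z\rangle^\sigma=\{b,a,z\}_1=b\bullet z$. The component $V^\sigma$ is an ideal of $\hat J$ with zero square. The projection $\hat J\to V^\sigma/(V^{ann})^\sigma$, $b+\bar x\mapsto \bar x$, is a homomorphism onto the homotope of the Jordan pair $\bar V$ at $\bar a$. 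Thus $\hat J$ is a split null extension of the Jordan algebra $\bar V^{(\bar a)}$ by the module $V^\sigma$, and the dialgebra product is recovered as $x\bullet y=2\,x\circ\bar y$.

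\textbf{Step 3.} Show that any $\phi$-module $J$ with product $x\bullet y=x\cdot \bar y$ has (JD1)--(JD3), whenever $J$ sits as a zero-square ideal in a Jordan algebra $J\oplus \bar J$, the map $x\mapsto\bar x$ is onto the quotient Jordan algebra, and $\cdot$ is the module action. Each axiom is a Jordan identity in $\hat J$ with exactly one entry from $J$ and the rest barred:
\begin{itemize}
\item (JD1) holds because $x\bullet(y\bullet z)=x\cdot\overline{y\cdot \bar z}=x\cdot(\bar y\bar z)$, which is symmetric in $y,z$ by commutativity in $\bar J$.
\item (JD2) is the linearization of the Jordan identity $(u\bar x)\bar x^2=(u\bar x^2)\bar x$ for $u\in J$.
\item (JD3) is the Jordan-module identity $(u,\bar y^2,\bar z)=2(u,\bar y,\bar z)\bar y$. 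This holds in any Jordan algebra as a partial linearization of the Jordan identity, and this step uses $\frac12,\frac13\in\phi$.
\end{itemize}
This is exactly the mechanism behind the split null extension description of Jordan dialgebras recalled in the preliminaries.

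\textbf{Main obstacle.} The delicate point is Step 2: checking that the bar map is well-defined, i.e.\ that $\overline{\{x,a,z\}_1}$ depends only on $\bar x,\bar z$. This needs $\{(V^{ann})^\sigma,V^{-\sigma},V^\sigma\}_1$ and $\{V^\sigma,V^{-\sigma},(V^{ann})^\sigma\}_1$ to lie in $(V^{ann})^\sigma$, which is the ideal property already proved in the Proposition on $V^{ann}$. One also has to check that the $\frac12$ normalization matches the constant $2$ in (JD3). If invoking the classical homotope theorem over a general $\phi$ is felt to be too heavy, the fallback is a direct verification of the needed identities in $\hat V$ from (JP2), using $D(p,\bar a)=\langle p,\bar a,\cdot\rangle$. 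The Jordan pair identity then gives the derivation-type formula $[D(x,\bar a),D(p,\bar a)]$ from which the Jordan identity of the homotope follows.
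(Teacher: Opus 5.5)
Your proof is correct, but it takes a genuinely different route from the paper.

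\textbf{The paper's route.} It works inside the Leibniz algebra $\mathrm{TKK}(V)$ and writes $x\bullet y=[x,[a,y]]$. It then expands each of (JD1)--(JD3) as a noncommutative polynomial in the right adjoint operators $A,Y,Z$ acting on $x$. The words killed by the $3$-grading are discarded, and the remaining terms cancel by the $Q$-Jordan relations $AYA^2=A^2YA$ and $Y^2AY=YAY^2$.

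\textbf{Your route.} You stay at the level of Jordan pairs. By the classical homotope theorem, $\hat V^{(\bar a)}$ is a Jordan algebra. Inside it, $V^\sigma$ is a zero-square ideal with $x\bullet y=2\,x\circ\bar y$ and $\overline{x\bullet y}=2\,\bar x\circ\bar y$. Then (JD1)--(JD3) are exactly three identities of $\hat V^{(\bar a)}$, evaluated with one argument $u\in V^\sigma$ and the others barred:
\begin{itemize}
\item commutativity gives (JD1);
\item the Jordan identity $[L_w,L_{w^2}]=0$ gives (JD2);
\item the identity $(u,w^2,v)=2(u,w,v)w$ gives (JD3). It follows from $[L_v,L_{w^2}]=2[L_{wv},L_w]$ and $L_{w^2v}=L_{w^2}L_v+2L_{wv}L_w-2L_wL_vL_w$, both of which need only $\frac12$.
\end{itemize}

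\textbf{What each approach buys.} Your argument explains why the result holds: the dialgebra axioms are just Jordan identities with exactly one unbarred entry. It also avoids the Leibniz TKK and all the word bookkeeping. Its cost is the black box of the homotope theorem for linear Jordan pairs over $\phi$, which the paper's computation does not invoke.

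\textbf{Common dependence on Remark~\ref{rem:jp}.} Both proofs rest on the fact that $\hat V$ is a Jordan pair. The paper's proof of Theorem~\ref{thm:TKK} uses it too. If you want to make this explicit, check (JP2) in $\hat V$ with a single unbarred entry in each of the five positions. These cases are exactly (JPD5) (twice, once relabelled), (JPD6), (JPD8) and (JPD7). Axioms (JPD1)--(JPD4) give (JP1) and the well-definedness of the products.

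\textbf{Minor corrections.}
\begin{itemize}
\item (JD2) is the Jordan identity itself, which is already linear in $u$; no linearization is involved.
\item Neither announced obstacle is real. Well-definedness of $\overline{\{x,a,z\}_1}$ is the ideal property of $V^{ann}$, which the paper already proves.
\item The factor $\frac12$ is harmless. Each of (JD1)--(JD3) is homogeneous in the product, so rescaling by $2$ changes nothing.
\item Surjectivity of the bar map is never used.
\end{itemize}
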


\begin{proof}
We will move to the (right) Leibniz algebra $L={\rm TKK}(V)=L_{\sigma 1}\oplus L_0\oplus L_{-\sigma 1}$. Recall that $L_{\sigma 1}=V^{\sigma}$ and $a\in L_{-\sigma 1}$.
Notice that for any $y\in L_{\sigma1}$ and any $b\in L_{-\sigma1}$, both $y$ and $b$ are $Q$-Jordan elements of $L$ by the finite $\mathbb{Z}$-grading of $L$,  and  therefore, by \ref{QJordan}, $$BYB^2=B^2YB \ \hbox{ and }\ Y^2BY=YBY^2.\eqno{(1)}$$

 Let us prove the right commutativity, i.e.,  $x\bullet(y\bullet z)=x\bullet(z\bullet y)$ for every $x,y\in V^\sigma$:
\begin{align*}
 &x\bullet(y\bullet z)=[x,[a,[y,[a,z]]]]\\&=(x)(-A^2  Z  Y + A  Y  A  Z - A  Y  Z  A +
 A  Z  A  Y + A  Z  Y  A - Y  A  Z  A +
 Y  Z  A^2 - Z  A  Y  A)\\&= (x)(-A^2  Z  Y + A  Y  A  Z  +
 A  Z  A  Y )
\end{align*}
since the terms $(x) A  Y  Z  A=0$,  $(x)A  Z  Y  A=0$, $(x) Y  A  Z  A=0 $, $(x) Y  Z  A^2=0 $ and $(x) Z  A  Y  A=0$ by the  finite $\mathbb{Z}$-grading of $L$.
\begin{align*}
 &x\bullet(z\bullet y)=[x,[a,[z,[a,y]]]]\\&=(x)(-A^2  Y  Z + A  Z  A  Y - A  ZY  A +
 A  Y  A  Z + A  YZ  A - Z  A  Y  A +
 ZY  A^2 - Y  A  Z  A)\\& =(x)(-A^2  YZ + A  Z  A  Y  +
 A  Y  A  Z )\qquad \hbox{(by the  finite $\mathbb{Z}$-grading of $L$)}
\end{align*}
Thus
\begin{align*}
&x\bullet(y\bullet z)-x\bullet(z\bullet y)\\&=(x)(-A^2  Z  Y + A  Y  A  Z  +
 A  Z  A  Y -(-A^2  YZ + A  Z  A  Y  +
 A  Y  A  Z ))\\&=(x)(-A^2  Z  Y +A^2  YZ)=0.
\end{align*}

 Let us prove the right Jordan identity, i.e,  $(x\bullet y)\bullet y^{2}=(x\bullet y^{2})\bullet y$ for every $x,y,z\in V^\sigma$:
\begin{align*}
&(x\bullet y)\bullet y^{2}=[[x,[a,y],[a,[y,[a,y]]]]]\\&=(x)(-A  Y  A^2  Y^2 + 2 A  Y  A  Y  A  Y -
 2 A  Y^2  A  Y  A + A  Y^3  A^2 \\&+
 Y  A^3  Y^2 - 2 Y  A^2  Y  A  Y +
 2 Y  A  Y  A  Y  A - Y  A  Y^2  A^2)=(x)(-A  Y  A^2  Y^2 + 2 A  Y  A  Y  A  Y  \\&- 2 Y  A^2  Y  A  Y +
 2 Y  A  Y  A  Y  A )\qquad \hbox{(by the  finite $\mathbb{Z}$-grading of $L$)}
\end{align*}
\begin{align*}
&(x\bullet y^{2})\bullet y=[[x,[a,[y,[a,y]]]],[a,y]]\\&=(x)(-A^2  Y^2  A  Y + A^2  Y^3  A + 2 A  Y  A  Y  A  Y - 2 A  Y  A  Y^2  A \\&-
 2 Y  A  Y  A^2  Y + 2 Y  A  Y  A  Y  A +
 Y^2  A^3 Y - Y^2  A^2  Y  A)\\&=(x)(-A^2  Y^2  A  Y  +
 2 A  Y  A  Y  A  Y  -
 2 Y  A  Y  A^2  Y + 2 Y  A  Y  A  Y  A) \\&\qquad \hbox{(by the  finite $\mathbb{Z}$-grading of $L$)}
\end{align*}

Since $Y^2AY=YAY^2$ and $A^2YA=AYA^2$ by (1) , we have
\begin{align*}
&(x\bullet y)\bullet y^{2}-(x\bullet y^{2})\bullet y\\&=(x)(-A  Y  A^2  Y^2 + 2 A  Y  A  Y  A  Y  - 2 Y  A^2  Y  A  Y +
 2 Y  A  Y  A  Y  A\\&-(-A^2  Y^2  A  Y
 +
 2 A  Y  A  Y  A  Y  -
 2 Y  A  Y  A^2  Y + 2 Y  A  Y  A  Y  A) )\\&=(x)(-A  Y  A^2  Y^2- 2 Y  A^2  Y  A  Y+A^2  Y^2  A  Y +2 Y  A  Y  A^2  Y)\\&
 =(x)(-A  Y  A^2  Y^2- 2 Y  A  Y  A^2  Y+A^2  Y  A  Y^2 +2 Y  A  Y  A^2  Y)\\&=(x)(-A  Y  A^2  Y^2+A  Y  A^2  Y^2 )=0.
\end{align*}

 Let us prove Osborn identity, i.e.,  for every $x,y,z\in V^\sigma$ $$(x\bullet y^{2})\bullet z-x\bullet(y^{2}\bullet z)=2\left(((x\bullet y)\bullet z)\bullet y-(x\bullet(y\bullet z))\bullet y\right):$$
\begin{align*}
& (x\bullet y^{2})\bullet z=[[x,[a,[y,[a,y]]]],[a,z]]=(x)[A,[Y,[A,Y]][A,Z]\\&
 =(x)(-A^2  Y^2  A  Z + A^2  Y^2  Z  A +
 2 A  Y  A  Y  A  Z - 2 A  Y  A  Y  Z  A -
  2 Y  A  Y  A  A  Z \\&+ 2 Y  A  Y  A  Z  A +
 Y  Y  A^3  Z - Y  Y  A^2  Z  A)\\&=(x)(-A^2  Y^2  A  Z + A^2  Y^2  Z  A +
 2 A  Y  A  Y  A  Z - 2 A  Y  A  Y  Z  A) \\&\qquad \hbox{(by the  finite $\mathbb{Z}$-grading of $L$).}
 \end{align*}
 \begin{align*}
& x\bullet(y^{2}\bullet z)=[x,[a,[[y,[a,y]],[a,z]]]]=(x)[A,[[Y,[A,Y]],[A,Z]]]\\&=(x)(-A^2  Y^2  A  Z + A^2  Y^2  Z  A +
 A^2  Z  A  Y^2 - 2 A^2  Z  Y  A  Y +
 A^2  Z  Y^2  A \\&+ 2 A  Y  A  Y  A  Z -
 2 A  Y  A  Y  Z  A - A  Y^2  A^2  Z +
 2 A  Y^2  A  Z  A - A  Y^2  Z  A^2 \\&-
 A  Z  A^2  Y^2 + 2 A  Z  A  Y  A  Y -
 2 A  Z  A  Y^2  A + 2 A  Z  Y  A  Y  A -
 A  Z  Y^2  A^2\\&
  - 2 Y  A  Y  A  Z  A +
 2 Y  A  Y  Z  A^2 + Y^2  A^2  Z  A -
 Y^2  A  Z  A^2 + Z  A^2  Y^2  A \\&-
 2 Z  A  Y  A  Y  A + Z  A  Y^2  A^2)\\&=(x)(-A^2  Y^2  A  Z + A^2  Y^2  Z  A +
 A^2  Z  A  Y^2 - 2A^2  Z  Y  A  Y  + 2A  Y  A  Y  A  Z\\& -
 2 A  Y  A  Y  Z  A  -
 A  Z  A^2  Y^2 + 2 A  Z  A  Y  A  Y)\\&\qquad \hbox{(by the  finite $\mathbb{Z}$-grading of $L$).}
\end{align*}

Since $A^2ZA=AZA^2$ by (1), we have
\begin{align*}
 &(x\bullet y^{2})\bullet z- x\bullet(y^{2}\bullet z)\\&=(x)(-A^2  Y^2  A  Z + A^2  Y^2  Z  A +
 2 A  Y  A  Y  A  Z - 2 A  Y  A  Y  Z  A\\&-(-A^2  Y^2  A  Z + A^2  Y^2  Z  A +
 A^2  Z  A  Y^2 - 2A^2  Z  Y  A  Y  \\&+ 2A  Y  A  Y  A  Z -
 2 A  Y  A  Y  Z  A  -
 A  Z  A^2  Y^2 + 2 A  Z  A  Y  A  Y ))\\&=(x)(-A^2ZAY^2+2A^2ZYAY+AZA^2Y^2-2AZAYAY)\\&=(x)(2A^2ZYAY-2AZAYAY).
 \end{align*}

On the other hand, and taking into account the finite  $\mathbb{Z}$-grading of $L$,
 \begin{align*}
&((x\bullet y)\bullet z)\bullet y=[[[x,[a,y]],[a,z]],[a,y]]=(x)[A,Y][A,Z][A,Y]\\&=(x)(A  Y  A  Z  A  Y - A  Y  A  Z  Y  A -
 A  Y  Z  A^2  Y + A  Y  Z  A  Y  A \\&-
 Y  A^2  Z  A  Y + Y  A^2  Z  Y  A +
 Y  A  Z  A^2  Y - Y  A  Z  A  Y  A)\\&=(x)(A  Y  A  Z  A  Y - A  Y  A  Z  Y  A -
 A  Y  Z  A^2  Y + A  Y  Z  A  Y  A).\\
&
 \\
&(x\bullet(y\bullet z))\bullet y=[[x,[a,[y,[a,z]]]],[a,y]]\\&=(x)(-A^2  Z  Y  A  Y + A^2  Z  Y^2  A +
 A  Y  A  Z  A  Y - A  Y  A  Z  Y  A \\&-
 A  Y  Z  A^2  Y + A  Y  Z  A  Y  A +
 A  Z  A  Y  A  Y - A  Z  A  Y^2  A \\&+
 A  Z  Y  A^2  Y - A  Z  Y  A  Y  A -
 Y  A  Z  A^2  Y +Y  A  Z  A  Y  A \\&+
 Y  Z  A^3  Y - Y  Z  A^2  Y  A -
 Z  A  Y  A^2  Y + Z  A  Y  A  Y  A)\\&=
 (x)(-A^2  Z  Y  A  Y  +
 A  Y  A  Z  A  Y - A  Y  A  Z  Y  A -
 A  Y  Z  A^2  Y \\&+ A  Y  Z  A  Y  A +
 A  Z  A  Y  A  Y).
\end{align*}

Thus
\begin{align*}
 &((x\bullet y)\bullet z)\bullet y-(x\bullet(y\bullet z))\bullet y=(x)(A  Y  A  Z  A  Y - A  Y  A  Z  Y  A\\& -
 A  Y  Z  A^2  Y + A  Y  Z  A  Y  A-(- A^2  Z  Y  A  Y  +
 A  Y  A  Z  A  Y \\&- A  Y  A  Z  Y  A -
 A  Y  Z  A^2  Y + A  Y  Z  A  Y  A +
 A  Z  A  Y  A  Y))\\&=(x)(A^2  Z  Y  A  Y-A  Z  A  Y  A  Y),
 \end{align*}
 which implies that
 $$(x\bullet y^{2})\bullet z- x\bullet(y^{2}\bullet z)=2((x\bullet y)\bullet z)\bullet y-(x\bullet(y\bullet z))\bullet y).$$

\end{proof}

\section{Inner ideals and subquotients in (right) Leibniz algebras}

\begin{defn}
    Let $L$ be a (right) Leibniz algebra and let $B\subseteq L$ be a submodule of $L$. We say that $B$ is an \textbf{inner ideal of $L$} if
    $$
        [[L,B],B]+[B,[B,L]]\subseteq B.
    $$
    Moreover, if $B$ satisfies $[B,B]=\{0\}$, we say that $B$ is a \textbf{abelian inner ideal of} $L$.

Notice that any element in an abelian inner ideal is a $Q$-Jordan element. Indeed, if $B$ is an abelian inner ideal of $L$ then, for any $b\in B$, we have that $\text{ad}_{b}^{3}(x)=[[[x,b],b],b]\in[B,B]=\{0\}$, for every $x\in L$, which implies that every element in $B$ is a $Q$-Jordan element of $L$.
\end{defn}

\begin{rem}
In \cite[Definition 48]{QJAlg} the authors defined  inner ideals only requiring  $[[L,B],B]\subseteq B$. Our definition is more restrictive and will allow us, as we will see in this section, to give a notion of subquotient in the context of Leibniz algebras.
\end{rem}

\begin{lem}
    Let $L$ be a (right) Leibniz algebra and let $a$ be a $Q$-Jordan element of $L$. Then $[[L,a],a]$ is an abelian inner ideal of $L$.
\end{lem}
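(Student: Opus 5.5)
The plan is to work entirely in the operator calculus of Definition \ref{QJordan}. Capital letters denote right adjoint maps, and the relations $[X,Y]=XY-YX$, $A^3=0$ and identity (1), namely $AXA^2=A^2XA$ and $A^2XA^2=0$ for every $x\in L$, are available because $a$ is $Q$-Jordan. Set $B=[[L,a],a]=(L)A^2$, which is clearly a submodule. For $b=(y)A^2\in B$ the adjoint map of $b$ is
\[
[[Y,A],A]=YA^2-2AYA+A^2Y .
\]
Throughout I will denote this operator by $\mathcal{B}_y$, to avoid a clash with the module $B$. The strategy is to show that every required product can be written as $(w)T$ where $T$ is an operator word ending in $A^2$, or is zero. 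Since $(w)TA^2\in(L)A^2=B$, this is enough.

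\textbf{Abelian.} For $b=(y)A^2$ and $c=(z)A^2$ we have $[b,c]=(y)A^2(ZA^2-2AZA+A^2Z)$. The second and third terms contain $A^3=0$, and the first is $A^2ZA^2=0$ by (1). Hence $[B,B]=\{0\}$.

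\textbf{$[[L,B],B]\subseteq B$.} By linearity and polarization (using $\tfrac12\in\phi$) it suffices to treat $[[w,b],b]=(w)\mathcal{B}_y^2$. Expanding the square, discarding every word containing $A^3$ or $A^2YA^2$, and using $AYA^2=A^2YA$ gives
\[
\mathcal{B}_y^2=-2AYAYA^2+2A^2YAYA+A^2Y^2A^2 .
\]
The main obstacle is the middle term, which does not visibly end in $A^2$. To handle it I will apply (1) with $X$ the adjoint map of $[[y,a],y]$. This map is $[[Y,A],Y]=2YAY-AY^2-Y^2A$, and comparing $A^2(\cdot)A$ with $A(\cdot)A^2$ yields $A^2YAYA=AYAYA^2$. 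The middle term then cancels the first, so $\mathcal{B}_y^2=A^2Y^2A^2$ and $[[w,b],b]=(w)A^2Y^2A^2\in B$.

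\textbf{$[B,[B,L]]\subseteq B$.} By the right Leibniz identity, $[[b,b],w]=[[b,w],b]+[b,[b,w]]$. Since $[b,b]=0$, this gives $[b,[b,w]]=-[[b,w],b]=-(y)A^2W\mathcal{B}_y$; polarization again covers the mixed products $[b,[c,w]]+[c,[b,w]]$. Expanding,
\[
A^2W\mathcal{B}_y=A^2WYA^2-2A^2WAYA+A^2WA^2Y .
\]
The last term vanishes by (1). For the middle term I apply (1) twice: first $A^2WA=AWA^2$, then $A^2YA=AYA^2$, giving $A^2WAYA=AWAYA^2$, which ends in $A^2$. The first term already ends in $A^2$. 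Hence $[b,[b,w]]\in B$.

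Together these show that $B=[[L,a],a]$ is an abelian inner ideal of $L$.
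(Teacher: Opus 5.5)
Your approach is correct, but one step has a small logical gap. You show $[b,[b,w]]\in B$ and then polarize. That only gives $[b,[c,w]]+[c,[b,w]]\in B$. To conclude $[b,[c,w]]\in B$ you would also need control of the antisymmetric part $[b,[c,w]]-[c,[b,w]]$, and you have none.

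The two terms genuinely differ here. Take $L=\mathfrak{sl}_2\oplus\mathfrak{sl}_2'$, where $x'$ is a copy of $x$ and $[x+u',y+v']=[x,y]+[u,y]'$. This is a right Leibniz algebra, $a=e$ is $Q$-Jordan and $B=\phi e+\phi e'$. Yet $[e,[e',f]]=0$ while $[e',[e,f]]=-2e'$.

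In the $[[L,B],B]$ step polarization is legitimate, but only because the Leibniz identity and $[b,c]=0$ give $[[w,b],c]=[[w,c],b]$. That symmetry is what lets you divide by $2$, and you should say so.

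The repair is to drop polarization altogether. For $b=(y)A^2$ and $c=(z)A^2$, the Leibniz identity and $[b,c]=0$ give $[b,[c,w]]=-[[b,w],c]=-(y)A^2W\mathcal{B}_z$. Your expansion then goes through verbatim and yields $[b,[c,w]]=(y)(-A^2WZA^2+2AWAZA^2)\in B$.

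Likewise $\mathcal{B}_y\mathcal{B}_z=A^2YZA^2$ follows directly, since $AYA^2ZA$ and $A^2YAZA$ both equal $AYAZA^2$ by two applications of identity (1) of Definition \ref{QJordan}. The same two-step use of (1) gives $A^2YAYA=AYAYA^2$, so your detour through the adjoint of $[[y,a],y]$ is unnecessary.

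Apart from this, your argument is a self-contained version of the paper's. The paper does not prove $[[L,B],B]\subseteq B$ at all; it cites \cite[Lemma 49]{QJAlg}. Your computation supplies this inclusion, and $\mathcal{B}_y\mathcal{B}_z=A^2YZA^2$ is exactly the identity the paper later uses in Lemma \ref{kernelscontained}.

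For $[B,[B,L]]\subseteq B$, the paper expands $(x)A^2(CZ-ZC)$ with $C=\mathrm{ad}_c$, keeping $c$ inside a left multiplication. You instead first trade $[b,[c,w]]$ for $-[[b,w],c]$, which halves the number of terms. Both routes arrive at the same expression $-A^2WZA^2+2AWAZA^2$ applied to the generator of $b$.
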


\begin{proof}
    Let $a$ be a $Q$-Jordan element of $L$ and  let us consider $B=[[L,a],a]$. It is easy to prove that $B$ is a submodule of $L$ and $[B,B]=\{0\}$. We need to see that
    $$
        [[L,B],B]+[B,[B,L]]\subseteq B.
    $$
  By \cite[Lemma 49]{QJAlg}, we have $[[L,B],B]\subseteq B$. Let us show that $[B,[B,L]]\subseteq B$.  For every $x,y,z\in L$
\begin{align*}
 [[[x,a],a],&[[[y,a],a],z]]=(x)A^2[[[Y,A],A],Z]
 \\
 &=(x)A^2 ((YA^2+A^2Y-2AYA)Z-Z(YA^2+A^2Y-2AYA))
 \\
 &=(x)(-A^2ZYA^2+2AZAYA^2)\in B
\end{align*}
because $A^3=0$, $A^2YA^2=A^2ZA^2=0$ and $A^2ZAYA=AZA^2YA=AZAYA^2$ by \ref{QJordan}.
\end{proof}

\begin{defn}\label{kernel}
Let $L$ be a (right) Leibniz algebra  and let
$B$ be an inner ideal of $L$. We define the $\phi$-module
$$\text{Ker}_{L}B:=\{x\in L:[[x,B],B]=[B,[B,x]]=0\}$$
and we call it the \textbf{kernel of $B$ in $L$}.
\end{defn}

Let us see a technical lemma about kernels.

\begin{lem}\label{lem:ker}
    Let $L$ be a (right) Leibniz algebra and let $B$ an inner ideal of $L$. Then
\begin{align*}
&[{\rm Ker}_{L}B,[B,L]]\ +\ [[B,L], {\rm Ker}_{L}B]\ +\ [L,[B,{\rm Ker}_{L}B]]\subseteq  {\rm Ker}_{L}B \hbox{ and } \\&[L,B]\ +\ [B,L]\ +\ [[L,B],[L,B]] \subseteq {\rm Ker}_{L}B.
\end{align*}
 \end{lem}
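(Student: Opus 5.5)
Throughout I will use that $B$ is \emph{abelian}, $[B,B]=\{0\}$, as in the setting of this section (subquotients of abelian inner ideals); without this the second chain of inclusions already fails for $[L,B]$. The tools are the right Leibniz identity and right anticommutativity $[x,[y,z]]=-[x,[z,y]]$. The recurring device is the operator calculus of Definition \ref{QJordan}: $(z)[X,Y]=(z)(XY-YX)$, with capital letters $B_1,B_2$ for ${\rm ad}_{b_1},{\rm ad}_{b_2}$.

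\textbf{Second line.} For $l\in L$ and $b,b_1,b_2\in B$:
\begin{itemize}
\item Since $[[L,B],B]\subseteq B$ and $B$ is a submodule, $[[[l,b],b_1],b_2]\in[B,B]=0$.
\item By anticommutativity $[b_1,[l,b]]=-[b_1,[b,l]]\in[B,[B,L]]\subseteq B$, so $[b_2,[b_1,[l,b]]]\in[B,B]=0$.
\end{itemize}
Hence $[L,B]\subseteq{\rm Ker}_LB$. By anticommutativity $[b_i,[b,l]]=-[b_i,[l,b]]$, so the same two computations give $[B,L]\subseteq{\rm Ker}_LB$.

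For $[[l,b],[m,c]]$, expand with the Leibniz identity:
$$[[l,b],[m,c]]=[[[l,b],m],c]-[[[l,b],c],m].$$
\begin{itemize}
\item The second summand is $[B,L]$-type, since $[[l,b],c]\in B$, so it lies in ${\rm Ker}_LB$.
\item For the first summand, apply ${\rm ad}_{b_1}{\rm ad}_{b_2}$ and also $[b_2,[b_1,\cdot\,]]$. Rewrite these via the operator calculus as words in $L,B,M,C,B_1,B_2$ acting on $l$ (or as $[b_2,[b_1,\cdot]]$ expansions).
\item Each resulting word should contain a factor $[[\,\cdot\,,b'],b'']$ landing in $B$ and then hit by another element of $B$, or contain $[b',b'']=0$. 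I expect this bookkeeping to be the main obstacle: some terms like $[[[l,b],m],c]$ are not obviously in $[L,B]$.
\item If the direct expansion stalls, I will first try writing $[[l,b],m]=[[l,m],b]+[l,[b,m]]$. Then $[[[l,m],b],c]\in B$, whose image under both kernel conditions is zero. The remaining term $[[l,[b,m]],c]$ is handled by expanding $[l,[b,m]]=[[l,b],m]-[[l,m],b]$ once more and using symmetry in the two $B$-slots.
\end{itemize}

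\textbf{First line.} Let $k\in{\rm Ker}_LB$, $b,b_1,b_2\in B$, $l\in L$.
\begin{itemize}
\item For $[k,[b,l]]=[[k,b],l]-[[k,l],b]$, apply ${\rm ad}_{b_1}{\rm ad}_{b_2}$. Commute the $B$-operators past $L$ using $[X,Y]={\rm ad}_{[x,y]}$ and $[B_i,B_j]={\rm ad}_{[b_i,b_j]}=0$. The terms then either contain $(k)B_iB_j=0$, or factor through $[B,[B,k]]=0$ (for the left-multiplication conditions, via anticommutativity), or land in $[B,B]=0$ after using the inner-ideal property.
\item $[[b,l],k]$ is treated by the same expansion.
\item $[l,[b,k]]$ is handled via $[l,[b,k]]=[[l,b],k]-[[l,k],b]$ and the analogous cancellations.
\end{itemize}
The key identities to aim for are $(k)B_1B_2=0$ and $[b_1,[b_2,k]]=0$, together with $B$ being abelian, so that every surviving monomial vanishes.
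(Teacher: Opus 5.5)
You are right that the lemma needs $B$ abelian: the paper's own proof also uses $[B,B]=\{0\}$ (e.g.\ in $[[[x,b],a],c]=0$ for $a,b,c\in B$), and $B$ is abelian wherever the lemma is applied. Your treatment of $[L,B]$ is correct.

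\textbf{Main gap: $[[L,B],[L,B]]$.} You stop exactly where the argument is already finished. After $[[l,b],[m,c]]=[[[l,b],m],c]-[[[l,b],c],m]$, the first summand is $[y,c]$ with $y=[[l,b],m]\in L$ and $c\in B$. So it lies in $[L,B]$ by definition, and hence in ${\rm Ker}_LB$ by the inclusion you have just proved. Together with your correct observation that the second summand lies in $[B,L]$, this gives the inclusion.

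Your remark that such terms are ``not obviously in $[L,B]$'' is mistaken, and the fallback is circular. Rewriting $[l,[b,m]]=[[l,b],m]-[[l,m],b]$ just undoes the expansion $[[l,b],m]=[[l,m],b]+[l,[b,m]]$ and returns you to $[[[l,b],m],c]$. As written, this inclusion is therefore not proved, though the repair is one line. Once repaired, your reduction to $[L,B]+[B,L]$ is shorter than the paper's, which checks both kernel conditions on $[[b,x],[d,y]]$ by hand.

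\textbf{Smaller points.}
\begin{enumerate}
\item For $[B,L]$, right anticommutativity only handles the condition $[b_2,[b_1,[b,l]]]=0$. For $[[[b,l],b_1],b_2]$ you need $[[b,l],b_1]=[[b,b_1],l]+[b,[l,b_1]]=[b,[l,b_1]]\in B$. The reason is that $[b,l]+[l,b]\in L^{ann}$, and $L^{ann}$ is killed only in the second slot ($[L,L^{ann}]=\{0\}$), not in the first.
\item The first line is only sketched. What makes your operator bookkeeping close are the auxiliary identities $[[B,{\rm Ker}_LB],B]=\{0\}=[B,[{\rm Ker}_LB,B]]$, which the paper records at the start of its proof. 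For the left-type conditions you also need the expansion $[c,[u,v]]=[[c,u],v]-[[c,v],u]$.
\item Your reduction $[l,[b,k]]=[[l,b],k]-[[l,k],b]$ needs $[[L,B],{\rm Ker}_LB]\subseteq{\rm Ker}_LB$. That is not one of the lemma's inclusions: the lemma has $[[B,L],{\rm Ker}_LB]$, and $[[l,b],k]$ need not equal $-[[b,l],k]$. It does hold by the same kind of computation, but you must verify it, or check $[L,[B,{\rm Ker}_LB]]$ directly as the paper does.
\end{enumerate}
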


\begin{proof} Firstly, let us note that $[B,[{\rm Ker}_{L}B,B]]=\{0\}$ and $[[B,{\rm Ker}_{L}B],B]=\{0\}$.

\noindent $\bullet$ $[{\rm Ker}_{L}B,[B,L]]\subseteq {\rm Ker}_{L}B$:  for every $k\in {\rm Ker}_{L}B$, $a,b,c\in B$ and $x\in L$,
\begin{align*}
  [[[k,[&b,x]],a],c]=[[[k,a],[b,x]],c]+[[k,[[b,x],a]],c]\\&=[[[k,a],c],[b,x]]+[[k,a],[[b,x],c]]\\&=-[[k,a],[c,[b,x]]]=0,\\
  [c,[a&,[k,[b,x]]]]=[c,[[a,k],[b,x]]]+[c,[k,[a,[b,x]]]]\\&=[[c,[a,k]],[b,x]]-[[c,[b,x]],[a,k]]\\&-[c,[[a,[b,x]],k]]=0.
\end{align*}

\noindent $\bullet$ $[[B,L], {\rm Ker}_{L}B]\subseteq {\rm Ker}_{L}B$: for every $k\in {\rm Ker}_{L}B$, $a,b,c\in B$ and $x\in L$,
\begin{align*}
  [[[[b,&x],k],a],c]=[[[b,x],a],k],c]=0,\\
  [c,[&a,[[b,x],k]]]=[c,[[a,[b,x]],k]]+[c,[[b,x],[a,k]]]\\&=0+[[c,[b,x]],[a,k]]-[[c,[a,k]],[b,x]]=0.
\end{align*}

\noindent $\bullet$ $[L,[B,{\rm Ker}_{L}B]]\subseteq {\rm Ker}_{L}B$: for every $k\in {\rm Ker}_{L}B$, $a,b,c\in B$ and $x\in L$,
\begin{align*}
  [[[x,[&b,k]],a],c]=[[[x,a],[b,k]],c]+[[x,[[b,k],a]],c]\\&=[[[x,a],c],[b,k]]+[[x,a],[[b,k],c]]\\&=-[[x,a],[c,[b,k]]]=0,\\
  [c,[a,[x,[b,k]]]]&=[c,[[a,x],[b,k]]]+[c,[x,[a,[b,k]]]]\\&=[[c,[a,x]],[b,k]]-[[c,[b,k]],[a,x]]+0=0.
 \end{align*}

 \noindent $\bullet$ $[L,B]\subseteq {\rm Ker}_{L}B$: for every  $a,b,c\in B$ and $x\in L$,
$$
  [[[x,b],a],c]=0 \qquad\hbox{and}\qquad
  [c,[a,[x,b]]]=-[c,[[x,b],a]]=0
$$

 \noindent $\bullet$ $[B,L]\subseteq {\rm Ker}_{L}B$: for every  $a,b,c,d\in B$ and $x,y\in L$,
$$
  [c,[a,[b,x]]]=0, \qquad\hbox{and}\qquad
    [[[b,x],a],c]=[[b,[x,a]],c]=[b,[[x,a],c]]=0.
$$

 \noindent $\bullet$ $[[L,B],[L,B]]\subseteq {\rm Ker}_{L}B$: for every  $a,b,c,d\in B$ and $x,y\in L$,
\begin{align*}
  [c,[a,[[b,x],[d,y]]]]&=[c,[[a,[b,x]],[d,y]]]]+[c,[[b,x],[a,[d,y]]]]=0, \\
    [[[[b,x],[d,y]],a],c]&=[[[[b,x],a],[d,y]],c]+[[[b,x],[[d,y],a]],c]=0.
\end{align*}
\end{proof}

\begin{thm}
Let $L$ be a (right) Leibniz algebra and let $B$ be an abelian inner ideal of $L$.
Then the pair of $\phi$-modules \textup{$V=(B,L/\text{Ker}_{L}B)$}
    has structure of (right) Jordan pair disystem with products
    \[
    \{b_{1},\overline{x},b_{2}\}_1= [b_{1},[x,b_2]]\text{, \qquad}\{\overline{x},b,\overline{y}\}_1= \overline{[x,[b,y]]},
    \]
    \[
    \{b_{1},\overline{x},b_{2}\}_2=-[[x,b_{1}],b_{2}]\text{, \qquad}\{\overline{x},b,\overline{y}\}_2=-\overline{[[b,x],y]},
    \]
    for every $b_{1},b_{2}\in B$ and for every \textup{$\overline{x},\overline{y}\in L/\text{Ker}_{L}B$},
    where \textup{$\overline{x}=x+\text{Ker}_{L}B$} and \textup{$\overline{y}=y+\text{Ker}_{L}B$}. This (right) Jordan pair disystem will be called the {\rm \bf subquotient of $L$ induced by $B$}.
\end{thm}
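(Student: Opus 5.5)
The plan has two parts. First, show that the four products are well defined and land in the right modules. Second, verify (JPD1)--(JPD8) by the same Leibniz-identity manipulations used in Theorem \ref{AssociatedPair}, now read modulo ${\rm Ker}_L B$ on the $L$-side and using the inner ideal property on the $B$-side.

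\textbf{Well-definedness.}

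For $b_1,b_2\in B$, both $[b_1,[x,b_2]]$ and $-[[x,b_1],b_2]$ lie in $B$, because $[[L,B],B]+[B,[B,L]]\subseteq B$. Here one uses right anticommutativity, $[b_1,[x,b_2]]=-[b_1,[b_2,x]]$.

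If $x\in{\rm Ker}_LB$, both products vanish by the definition of the kernel. For the first we again use right anticommutativity, $[b_1,[x,b_2]]=-[b_1,[b_2,x]]$, so only the condition $[B,[B,x]]=0$ is needed. Hence the $B$-valued products depend only on $\overline{x}$.

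For the $L/{\rm Ker}_LB$-valued products we must check that $[x,[b,y]]$ and $[[b,x],y]$ lie in ${\rm Ker}_LB$ as soon as $x$ or $y$ does. Every case is covered by Lemma \ref{lem:ker}:
\begin{itemize}
\item[] if $x\in{\rm Ker}_LB$, then $[x,[b,y]]\in[{\rm Ker}_LB,[B,L]]$;
\item[] if $y\in{\rm Ker}_LB$, then $[x,[b,y]]\in[L,[B,{\rm Ker}_LB]]$;
\item[] if $y\in{\rm Ker}_LB$, then $[[b,x],y]\in[[B,L],{\rm Ker}_LB]$;
\item[] if $x\in{\rm Ker}_LB$, then $[[b,x],y]=[[b,y],x]+[b,[x,y]]$, and $[[b,y],x]\in[[B,L],{\rm Ker}_LB]$ while $[b,[x,y]]\in[B,L]\subseteq{\rm Ker}_LB$.
\end{itemize}

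\textbf{The identities.}

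I would repeat, identity by identity, the computations of Theorem \ref{AssociatedPair}. In that proof the grading was used only to kill certain terms: products of two elements of $L_{\sigma n}$, and terms such as $[y,[x,z]]$ with $x,z$ in the same wing.

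In the present setting the corresponding vanishings come from three sources.
\begin{itemize}
\item[] On the $B$-side: $[B,B]=0$.
\item[] On the $L/{\rm Ker}_LB$-side: $[L,B]+[B,L]+[[L,B],[L,B]]\subseteq{\rm Ker}_LB$ from Lemma \ref{lem:ker}, and also $[x,y]$-type terms appearing inside a bracket with $B$ that end up in the kernel.
\item[] The kernel conditions themselves: $[[k,b],b']=0$ and $[b',[b,k]]=0$.
\end{itemize}

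For example, (JPD1) on the $L$-side reads $[[b,x],y]\equiv[[b,y],x]$. This holds because $[b,[x,y]]\in[B,L]\subseteq{\rm Ker}_LB$. On the $B$-side, (JPD1) is $[[x,b_1],b_2]=[[x,b_2],b_1]+[x,[b_1,b_2]]$, and the last term is zero since $[B,B]=0$.

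Each of the remaining axioms (JPD2)--(JPD8), for $\sigma=+$ (outer variables in $B$) and for $\sigma=-$, is expanded with the right Leibniz identity exactly as before. In each case I check that the discarded terms are zero or lie in ${\rm Ker}_LB$.

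\textbf{Main obstacle.}

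The hard part is the $\sigma=-$ versions of (JPD5)--(JPD8). There the equalities hold only modulo ${\rm Ker}_LB$, and the terms dropped in the graded proof (for instance $[x,[y,\ldots]]$ with $x,y$ both in $L$) need not be zero in $L$. The first tool I would try is to show that each such leftover term becomes $0$ after applying $[[\cdot,b],b']$ and $[b',[b,\cdot]]$, using the abelian property together with the inner ideal property.

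An alternative, in the spirit of Remark \ref{rem:wingsLeibniz}, is to pass to the Lie algebra $\hat L$. There $B$ remains an abelian inner ideal, and one can invoke the known Lie subquotient result of \cite{FGGN} for $(B,\hat L/{\rm Ker}\,B)$ to recover the identities after projecting. I expect this route to need a careful comparison of kernels, so I would use it only if the direct check becomes unwieldy.
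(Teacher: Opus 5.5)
Your route is the paper's route. You check well-definedness via Lemma \ref{lem:ker}, then verify (JPD1)--(JPD8) on both sides by Leibniz-identity expansions, discarding terms that vanish by $[B,B]=0$ and the inner ideal property, or that lie in $[L,B]+[B,L]+[[L,B],[L,B]]\subseteq \mathrm{Ker}_LB$. Your well-definedness paragraph is complete. For $\{\overline{x},b,\overline{y}\}_2$ with $x\in\mathrm{Ker}_LB$ you even supply the rewriting $[[b,x],y]=[[b,y],x]+[b,[x,y]]$ that the paper leaves implicit, and your (JPD1) is fine.

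The gap is that for (JPD2)--(JPD8) you only assert that the computations of Theorem \ref{AssociatedPair} transfer. You leave the case you call hardest open, with a tool you ``would try''. Those verifications are the whole content of the theorem, and the paper's proof consists of carrying them out explicitly. You also misplace the difficulty:
\begin{itemize}
\item All four products are the universal expressions $\{p,q,r\}_1=[p,[q,r]]$ and $\{p,q,r\}_2=-[[q,p],r]$. Hence (JPD5) and (JPD6) hold exactly in any right Leibniz algebra, for both values of $\sigma$: they are the derivation property of $\mathrm{ad}_{[u,v]}$ plus right anticommutativity, and nothing is discarded.
\item (JPD2)--(JPD4) reduce to the identity $\{x,y,z\}_1-\{x,y,z\}_2=[[x,y]+[y,x],z]-[[x,z],y]$. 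The first summand lies in $L^{ann}$ and is killed because $L^{ann}$ is an ideal with $[L,L^{ann}]=0$. The term $[[x,z],y]$ is $0$ for $x,z\in B$, and for $x,z\in L$ it lies in $[L,B]\subseteq\mathrm{Ker}_LB$, so it disappears by well-definedness.
\end{itemize}

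The kernel is genuinely needed in (JPD7) and (JPD8) for $\sigma=-$. There your first tool works with no new computation, because every discarded term is already covered by Lemma \ref{lem:ker}. For (JPD7), $\{\overline{x},b_1,\{\overline{y},b_2,\overline{z}\}_1\}_1$, the terms dropped in the graded proof become:
\begin{itemize}
\item $[[x,[y,[b_2,z]]],b_1]$ and $[[[[x,b_1],z],y],b_2]$, both in $[L,B]$;
\item $[[[x,y],b_1],[b_2,z]]$ and $[[[x,z],b_1],[b_2,y]]$, which lie in $[[L,B],[L,B]]$ after using $[u,[b,w]]=-[u,[w,b]]$;
\item $[[[x,b_1],b_2],[z,y]]$, which lies in $[B,L]$ since $[[x,b_1],b_2]\in B$.
\end{itemize}
For (JPD8) the dropped terms are $[[[[b_1,x],z],y],b_2]\in[L,B]$ and one containing $[b_1,b_2]=0$; the final reordering is (JPD1), which you have.

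On the $\sigma=+$ side, the substitute for the vanishing of degree-$2\sigma n$ terms is that composite elements such as $[[b_1,x],b_3]=[b_1,[x,b_3]]$ lie in $B$. Their bracket with another element of $B$ is therefore $0$; this is where the inner ideal property enters, not just $[B,B]=0$.

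Finally, your fallback through $\hat L$ fails as stated. $L$ is a zero-square ideal of $\hat L$, so $\langle B,\langle B,\hat L\rangle\rangle=0$, $\mathrm{Ker}_{\hat L}B=\hat L$, and the Lie subquotient is $(B,0)$ with zero products. You would have to use $B\oplus\overline{B}$ instead. Its kernel meets $L$ only in $\{k:[[k,B],B]=0\}$, which does not encode $[B,[B,k]]=0$, so the kernel comparison becomes real work. The direct check avoids all of this.
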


\begin{proof} By Lemma \ref{lem:ker} we have that for  $b_1, b_2 \in B$, $x \in L$ and $k \in \text{Ker}_LB$.

\begin{itemize}
  \item $\{b_1, k,b_2\}_1=[b_1,[k,b_2]]=0$ with implies that the product $\{b_{1},\overline{x},b_{2}\}_1= [b_{1},[x,b_2]]$ is well defined.
  \item $\{b_1, k,b_2\}_2=-[[k,b_1],b_2]=0$ with implies that the product $\{b_{1},\overline{x},b_{2}\}_2= [[x,b_1],b_2]$ is well defined.
  \item $\{k, b_1,x\}_1=[k,[b_1,x]]\in \text{Ker}_{L}B$ and $\{x, b_1,k\}_1=[x,[k,b_1]]=-[x,[b_1,k]]\in\text{Ker}_{L}B$ with implies that the product $\{\overline{x},b_{1},\overline{y}\}_1= \overline{[x,[b_1,y]]}$ is well defined.
  \item $\{k, b_1,x\}_2=-[[b_1,k],x]\in \text{Ker}_{L}B$ and $\{x, b_1,k\}_2=-[[b_1,x],k]\in \text{Ker}_{L}B$ with implies that the product $\{\overline{x},b_{1},\overline{y}\}_2= -\overline{[[x,b_1],y]}$ is well defined.
\end{itemize}

Let us see that $V$ is a (right) Jordan pair disystem:
    Let us consider $b_{1},b_{2},b_{3}\in B$ and $\overline{x},\overline{y},\overline{z}\in L/\text{Ker}_{L}B$,
    where $\overline{x}=x+\text{Ker}_{L}B$, $\overline{y}=y+\text{Ker}_{L}B$
    and $\overline{z}=z+\text{Ker}_{L}B$.
        \begin{itemize}
        \item [(JPD1)]
        \begin{align*}
            \{b_1, \overline{x}, b_2\}_2 & = -[[x, b_1],b_2] =-[[x,b_2],b_1] - [x,[b_1,b_2]] = \{b_2, \overline{x}, b_1\}_2
        \end{align*}
        because $[b_1,b_2] = 0$. In an analogous way,
        \begin{align*}
            \{\overline{x}, b,\overline{y}\}_2 & = -\overline{[[b, x],y]} = -\overline{[[b,y],x]} - \overline{[b,[x,y]]} = \{\overline{y}, b,\overline{x}\}_2
        \end{align*}
        because $[b,[x,y]] \in [B,L] \subseteq \text{Ker}_LB$ by Lemma \ref{lem:ker}.

        \item [(JPD2)]
        \begin{align*}
        \{b_{1},\,&\overline{x},\{b_{2},\overline{y},b_{3}\}_1\}_1 =[b_1, [{x}, [b_2,[y,b_3]]]]= [b_1, [{x}, [[b_2,y],b_3]]]+ [b_1, [{x}, [y,[b_2,b_3]]]] \\&= [b_1,[x,[b_3,[y,b_2]]]]+0 = \{b_1, \overline{x}, \{ b_3, \overline{y}, b_2\}_1\}_1
        \end{align*}
        because $[b_{2},b_{3}]=0$. Analogously,
        \begin{align*}
            \{\overline{x},\,& b_1, \{ \overline{y}, b_2, \overline{z}\}_1\}_1  = \overline{[x, [b_1, [y, [b_2, z]]]]} = \overline{[x, [b_1, [[y, b_2], z]]]} +\overline{[x, [b_1, [b_2, [y, z]]]]}\\&=\overline{[x, [b_1, [z,[b_2,y]]]]}+\overline{0}=\{\overline{x}, b_1, \{\overline{z}, b_2, \overline{y}\}_1\}_1
        \end{align*}
        because $[x, [b_1, [b_2, [y, z]]]]\in[L,B] \subseteq \text{Ker}_LB$ by Lemma \ref{lem:ker}.

        \item [(JPD3)]
        \begin{align*}
            \{b_{1},&\{\overline{x},b_{2},\overline{y}\}_1,b_{3}\}_1 = [b_1,[[x,[b_2,y]],b_3]]=[b_1,[[[x,b_2],y],b_3]]+[b_1,[[b_2,[x,y]],b_3]]\\&=
            [b_1,[[y,[b_2,x]],b_3]]+0=\{b_1, \{\overline{y},b_2,\overline{x}\}_1, b_3\}_1
        \end{align*}
        because $[b_1,[[b_2,[x,y]],b_3]]  \in [B,B] = \{0\}$, and
        \begin{align*}
            \{\overline{x},&\{b_{1},\overline{y},b_{2}\}_1,\overline{z}\}_1 = \overline{[x, [[b_{1},[y,b_{2}]],z] ]} = \overline{[x, [[[b_{1},y],b_{2}]],z] ]}+
            \overline{[x, [[y,[b_{1},b_{2}]],z] ]}\\&=\overline{[x, [[b_{2},[y,b_{1}]],z] ]}=\{\overline{x}, \{b_2,\overline{y},b_1\}_1, \overline{z}\}_1
        \end{align*}
        because $[b_1,b_2] \in [B,B] = \{0\}$.

        \item [(JPD4)]
        \begin{align*}
            \{\{b_1,\,& \overline{x}, b_2\}_1, \overline{y}, b_3\}_2  = - [[y,[b_1,[x,b_2]]],b_3]= - [[y,[[b_1,x],b_2]],b_3]-[[y,[x,[b_1,b_2]]],b_3]\\
            &=   [[y,[[x,b_1],b_2]],b_3]+0 =\{\{b_1, \overline{x}, b_2\}_2, \overline{y}, b_3\}_2,
        \end{align*}
        because $[b_1,b_2] = 0$. On the other hand,
        \begin{align*}
            \{\{\overline{x},\,&b_1,\overline{y}\}_1,b_2, \overline{z}\}_2  = - \overline{[[b_2, [x,[b_1,y]]],z]}=- \overline{[[b_2, [[x,b_1],y]],z]}- \overline{[[b_2, [b_1,[x,y]]],z]} \\&= \overline{[[b_2, [[b_1,x],y]],z]}+\overline 0=\{\{\overline{x},b_1,\overline{y}\}_2,b_2, \overline{z}\}_2,
        \end{align*}
        because $- [[b_2,[b_1,[x,y]]],z] \in [B,L] \subseteq \text{Ker}_LB$ by Lemma \ref{lem:ker}.

        \item [(JPD5)]
        \begin{align*}
            \{\{b_1,\,& \overline{x}, b_2\}_1, \overline{y}, b_3\}_1 = [[b_1,[x,b_2]], [y, b_3]] \\&=[[b_1,[y,b_3]], [x, b_2]]+[b_1,[[x,[y,b_3]],b_2]]+[b_1,[x,[b_2,[y, b_3]]]]\\&=\{\{b_1,\overline{y},b_3\}_1,\overline{x},b_2\}_1- \{b_1, \{\overline{x}, b_3, \overline{y}\}_1,b_2\}_1+\{b_1, \overline{x}, \{b_2, \overline{y}, b_3\}_1\}_1
        \end{align*}
        On the other hand,
        \begin{align*}
            \{\{\overline{x},b_1,&\overline{y}\}_1, b_2, \overline{z}\}_1 =\overline{[[x,[b_1,y]], [b_2, z]]}\\&=\overline{[[x,[b_2,z]], [b_1, y]]}+\overline{[x,[[b_1, [b_2, z]],y]]}+\overline{[x,[b_1,[y, [b_2, z]]]]}
            \\&= \{\{\overline{x},b_2,\overline{z}\}_1, b_1, \overline{y}\}_1 - \{\overline{x},\{b_1,\overline{z},b_2\}_1, \overline{y} \}_1+ \{\overline{x}, b_1, \{\overline{y}, b_2, \overline{z}\}_1\}_1
        \end{align*}

        \item [(JPD6)]
        \begin{align*}
            \{\{b_1,\,& \overline{x}, b_2\}_2, \overline{y}, b_3\}_1  =-[[[x,b_1],b_2],[y,b_3]]\\&= -[[[x,[y,b_3]],b_1],b_2]-[[x,[b_1,[y,b_3]]],b_2]-[[x,b_1],[b_2,[y,b_3]]] \\&= [[[x,[b_3,y]],b_1],b_2]-[[x,[b_1,[y,b_3]]],b_2]-[[x,b_1],[b_2,[y,b_3]]]\\
            &= -\{b_1, \{\overline{x},b_3, \overline{y}\}_1, b_2\}_2+\{\{b_1, \overline{y}, b_3\}_1, \overline{x}, b_2\}_2 + \{b_1, \overline{x}, \{b_2, \overline{y}, b_3\}_1\}_2 ,
        \end{align*}
        and, similarly,
        \begin{align*}
            \{\{\overline{x},\,&b_1,\overline{y}\}_2,b_2, \overline{z}\}_1  = -\overline{[[[b_1,x],y],[b_2,z]]}\\&=
            -\overline{[[[b_1,[b_2,z]],x],y]}-\overline{[[b_1,[x,[b_2,z]]],y]}-\overline{[[[b_1,x],[y,[b_2,z]]]}\\
            &=\overline{[[[b_1,[z,b_2]],x],y]}-\overline{[[b_1,[x,[b_2,z]]],y]}-\overline{[[[b_1,x],[y,[b_2,z]]]}\\
            &= -\{\overline{x}, \{b_1,\overline{z},b_2\}_1, \overline{y}\}_2+\{\{\overline{x},b_2, \overline{z}\}_1, b_1, \overline{y}\}_2  + \{ \overline{x}, b_1, \{\overline{y}, b_2, \overline{z}\}_1\}_2.
        \end{align*}

        \item [(JPD7)]
        \begin{align*}
            \{b_1, \overline{x},& \{b_2, \overline{y}, b_3\}_1\}_1  = [b_1, [x, [b_2, [y, b_3]]]] \\& = [[b_1, x], [b_2, [y, b_3]]] =([b_1,x])(B_2YB_3-B_2B_3Y-YB_3B_2+B_3YB_2)
            \\&=[[[[b_1,x],b_2],y],b_3]-0-[[[[b_1,x],y],b_3],b_2]+[[[[b_1,x],b_3],y],b_2]
            \\&=[[[[b_1,x],b_2],[y,b_3]]-[[[[b_1,x],y],b_2],b_3]+[[[[b_1,x],b_3],[y,b_2]]
            \\&=[[b_1,[x,b_2]],[y,b_3]]-[[[[b_1,x],y],b_2],b_3]+[[[b_1,x],b_3],[y,b_2]]
            \\ &=\{\{b_1, \overline{x},b_2\}_1,\overline{y},b_3\}_1  - \{b_2, \{\overline{x}, b_1, \overline{y}\}_2, b_3\}_2 + \{\{b_1,\overline{x},b_3\}_1, \overline{y}, b_2\}_1,
        \end{align*}
        because $[[[b_1,x],b_2],b_3] \in [B,B] = \{0\}$ and $[b_i,b_j] \in [B,B]=\{0\}$ for all $i,j\in \{1,2,3\}$. Similarly,
        \begin{align*}
            \{\overline{x},\,& b_1, \{\overline{y}, b_2, \overline{z}\}_1\}_1  = \overline{[x,[b_1,[y,[b_2,z]]]]}\\&=^{(1)} \overline{[[x,b_1],[y,[b_2,z]]]}=\overline{([x,b_1])(YB_2Z-YZB_2-B_2ZY+ZB_2Y)}
            \\&=^{(2)}\overline{[[[[x,b_1],y],b_2],z]}-\overline 0-\overline{[[[[x,b_1],b_2],z],y]}+\overline{[[[[x,b_1],z],b_2],y]}
            \\&=^{(3)}\overline{[[[x,b_1],y],[b_2,z]]}-\overline 0-\overline{[[[[x,b_1],b_2],z],y]}+\overline{[[[x,b_1],z],[b_2,y]]}
            \\&=^{(4)}\overline{[[x,[b_1,y]],[b_2,z]]}-\overline 0-\overline{[[[[x,b_1],b_2],y],z]}+\overline{[[x,[b_1,z]],[b_2,y]]}\\
            &=\{\{\overline{x}, b_1,\overline{y}\}_1,b_2,\overline{z}\}_1  - \{\overline{y}, \{b_1, \overline{x}, b_2\}_2, \overline{z}\}_2 + \{\{\overline{x},b_1,\overline{z}\}_1, b_2, \overline{y}\}_1.
        \end{align*}
        $^{(1)}$ because $[[x,[y,[b_2,z]],b_1]]\in [L,B]\subseteq \text{Ker}_LB$, \\$^{(2)}$ because $[[[[x,b_1],y],z],b_2]\in [L,B]\subseteq \text{Ker}_LB$, \\$^{(3)}$ because $[[[[x,b_1],y],z],b_2],[[[[x,b_1],z],y],b_2]\in [L,B]\subseteq \text{Ker}_LB$, \\$^{(4)}$ because $[[[x,y],b_1],[b_2,z]], [[[x,z],b_1],[b_2,y]]\in [[L,B],[L,B]]\subseteq \text{Ker}_LB$ and $[[[x,b_1],b_2],[y,z]]\in [B,L]\subseteq \text{Ker}_LB$.

        \item [(JPD8)]
        \begin{align*}
            \{b_1,\overline{x},& \{b_2, \overline{y}, b_3\}_1\}_2  =  [[x,b_1],[b_2,[b_3,y]]] \\&=([x,b_1](B_2B_3Y-B_2YB_3-B_3YB_2+YB_3B_2)
            \\&=0-[[[[x,b_1],b_2],y],b_3]-[[[[x,b_1],b_3],y],b_2]+[[[[x,b_1],y],b_3],b_2]
            \\&=-[[[x,b_1],b_2],[y,b_3]]+[[[x,[b_1,y]],b_3],b_2]-[[[x,b_1],b_3],[y,b_2]]
            \\&=-[[[x,b_1],b_2],[y,b_3]]+[[[x,[b_1,y]],b_2],b_3]-[[[x,b_1],b_3],[y,b_2]]
            \\&= \{\{b_1,\overline{x},b_2\}_2, \overline{y}, b_3\}_1 -\{b_2,\{\overline{x},b_1,\overline{y}\}_1, b_3\}_2 + \{\{b_1,\overline{x},b_3\}_2,\overline{y},b_2\}_1.
        \end{align*}
        because $[[[x,b_1],b_2],b_3] \in [B,B] = \{0\}$ and $[b_i,b_j] \in [B,B]=\{0\}$ for all $i,j\in \{1,2,3\}$. Similarly,
        \begin{align*}
            \{\overline{x},b_1,& \{\overline{y}, b_2, \overline{z}\}_1\}_2  = \overline{[[b_1,x],[y,[z,b_2]]]}\\&= \overline{([b_1,x])(YZB_2-YB_2Z-ZB_2Y+B_2ZY)}
            \\&=^{(1)} \overline{0}-\overline{[[[[b_1,x],y],b_2],z]}-\overline{[[[[b_1,x],z],b_2],y]}+\overline{[[[[b_1,x],b_2],z],y]}
            \\&=^{(2)} \overline{0}-\overline{[[[b_1,x],y],[b_2,z]]}+\overline{[[[b_1,[x,b_2]],z],y]}-\overline{[[[[b_1,z],x],b_2],y]}
            \\&=^{(3)} \overline{0}-\overline{[[[b_1,x],y],[b_2,z]]}+\overline{[[[b_1,[x,b_2]],y],z]}-\overline{[[[b_1,z],x],[b_2,y]]}
            \\&= \{\{\overline{x},b_1,\overline{y}\}_2, b_2, \overline{z}\}_1 -\{\overline{y},\{b_1,\overline{x},b_2\}_1, \overline{z}\}_2 + \{\{\overline{x},b_1,\overline{z}\}_2,b_2,\overline{y}\}_1.
        \end{align*}
         $^{(1)}$ because $[[[[b_1,x],y],z],b_2]\in [L,B]\subseteq \text{Ker}_LB$, \\$^{(2)}$ because $[[[[b_1,x],y],z],b_2]\in [L,B]\subseteq \text{Ker}_LB$, $[b_1,b_2]=0$ \\and $[[[b_1,[z,x]],b_2],y]\in [B,L]\subseteq \text{Ker}_LB$, \\$^{(3)}$ because $[[b_1,[x,b_2]],[y,z]]\in [B,L]\subseteq \text{Ker}_LB$ \\and $[[[[b_1,z],x],y],b_2]\in [L,B]\subseteq \text{Ker}_LB$.
    \end{itemize}
\end{proof}

Recall that  R. Felipe and R. Velasquez attached a Jordan dialgebra  $L_a$ to a (right) Leibniz algebra $L$ via a $Q$-Jordan element $a$, see Remark \ref{localRaules}.
Their definition of the kernel of a $Q$-Jordan element $a$ is
$${\rm ker}_L\, \{a\}=\{z\in L\ :\  [[z,a],a]=0 \}$$
and the product in the Jordan dialgebra $L_a$ is given by $\bar x\circ \bar y=\frac 12\overline{[x,[y,a]]}$ for every $\bar x,\bar y\in L_a=L/{\rm ker}_L\, \{a\}$.
Let us see that our construction of subquotient extends their construction when dealing with the abelian inner ideal $[[L,a],a]$ and the kernel of the $Q$-Jordan element $a$ coincides with the kernel of the abelian inner ideal $[[L,a],a]$ in the sense of \ref{kernel}. In general, we always have that ${\rm ker}_L\, \{a\}\subseteq {\rm Ker}_L\,[[L,a],a]$, as we will see in the next lemma:

\begin{lem}\label{kernelscontained}
  If $L$ is a (right) Leibniz algebra, $a\in L$ is a  $Q$-Jordan element and we consider the abelian inner ideal $B=[[L,a],a]$, then ${\rm ker}_L\, \{a\}\subseteq {\rm Ker}_L\,B$.
\end{lem}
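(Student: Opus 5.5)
The inclusion amounts to two vanishing statements. Fix $z\in{\rm ker}_L\{a\}$, so $[[z,a],a]=0$, i.e.\ $(z)A^2=0$. We must show that for every $b=[[x,a],a]$ and $c=[[y,a],a]$ in $B$ (arbitrary $x,y\in L$) we have $[[z,b],c]=0$ and $[c,[b,z]]=0$. I would work entirely in the operator calculus of Definition \ref{QJordan}. By the Leibniz identity,
$$B:={\rm ad}_b=[[X,A],A]=XA^2-2AXA+A^2X,\qquad C=YA^2-2AYA+A^2Y.$$
The relations from (1) available to us are $A^3=0$, $AXA^2=A^2XA$ and $A^2XA^2=0$, and the same with $X$ replaced by $Y$ or $Z$.

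\emph{First identity.} Write $[[z,b],c]=(z)BC$. Since $(z)A^2=0$, the summand $A^2X$ of $B$ contributes nothing, so $(z)BC=(z)(XA^2-2AXA)C$. Expanding this against the three summands of $C$ gives six words. Those with a factor $A^3$ or $A^2YA^2$ vanish. The surviving words are $AXAYA^2$ and $AXA^2YA$. Using $AYA^2=A^2YA$ and then $AXA^2=A^2XA$, each of these can be rewritten to begin with $A^2$, so both are killed by $(z)A^2=0$. Hence $[[z,b],c]=0$.

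\emph{Second identity.} By the Leibniz identity,
$$[c,[b,z]]=[[c,b],z]-[[c,z],b]=-[[c,z],b],$$
since $[c,b]\in[B,B]=\{0\}$. So it suffices to show $(y)A^2ZB=0$ for all $y$. Here the hypothesis on $z$ must be used at the operator level. From $[[z,a],a]=0$ we get ${\rm ad}_{[[z,a],a]}=0$, that is,
$$ZA^2-2AZA+A^2Z=0 .$$
Substituting $A^2Z=2AZA-ZA^2$ turns $A^2ZB$ into $(2AZA-ZA^2)(XA^2-2AXA+A^2X)$. I will expand this and simplify with $A^3=0$, the identities $A^2WA^2=0$ and $AWA^2=A^2WA$ for $W=X,Z$, and the relation above used once more, aiming to show that every term acquires either a factor $A^3$ or a factor $A^2WA^2$, or else cancels against another term. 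Finally I apply $(y)$ on the left, where $(y)A^3=0$ and $(y)A^2WA^2=0$.

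This second identity is where I expect the real work. The vanishing does not come for free from $(z)A^2=0$, because $z$ now sits in the middle of the word rather than at its left end, so the key step is exploiting the operator identity ${\rm ad}_{[[z,a],a]}=0$. If the direct expansion does not close up, I would fall back on rewriting $[[c,z],b]$ via the Leibniz identity as an expression of the form $\pm[[\,[[y,a],a],\ldots],\ldots]$ in which $z$ appears inside a bracket $[[z,a],a]$. Failing that, I would use right anticommutativity and $[L,B]\subseteq{\rm Ker}_LB$ (Lemma \ref{lem:ker}) to move $z$ into a position where $(z)A^2=0$ applies.
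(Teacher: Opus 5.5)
Your proof follows the paper's operator-calculus approach and the first identity is correct. You drop the $A^2X$ summand early and get $2A^2XAYA$, where the paper gets $A^2XYA^2$; both begin with $A^2$ and so vanish on $z$. Your reduction of the second identity, $[c,[b,z]]=-[[c,z],b]$ via $[B,B]=0$, is also correct. It is equivalent to the paper's computation of $(x)A^2[C,Z]$, where the half $A^2C$ vanishes.

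The gap is that you never carry out the second computation, and the mechanism you announce does not finish it. Expand $(2AZA-ZA^2)(XA^2-2AXA+A^2X)$ and apply $A^3=0$, $A^2WA^2=0$ and $AXA^2=A^2XA$. What remains is
\[
2AZAXA^2-4AZA^2XA=-2AZA^2XA .
\]
This word contains no factor $A^3$ or $A^2WA^2$, and nothing is left for it to cancel against. So "every term acquires $A^3$ or $A^2WA^2$, or cancels" is false at this point, and your hedging fallbacks reflect that.

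The missing idea is the consequence $AZA^2=A^2ZA=0$ of the hypothesis on $z$, which is exactly the step the paper uses. Multiply $ZA^2-2AZA+A^2Z=0$ on the right by $A$ to get $-2AZA^2+A^2ZA=0$. Identity (1) of Definition \ref{QJordan} gives $A^2ZA=AZA^2$, so $AZA^2=0$ and hence $A^2ZA=0$. The paper uses this to kill $2A^2ZAYA$ directly. It then rewrites $-A^2ZYA^2=ZA^2YA^2-2AZAYA^2=-2AZA^2YA$, which is also $0$. Once you state this consequence, your residual term $-2AZA^2XA$ vanishes and the proof is complete; none of the fallback routes is needed.
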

\begin{proof}
Let $a\in L$ be a  $Q$-Jordan element. Recall that
\begin{align*}
{\rm ker}_L\, \{a\}=\{z\in L\ :&\  [[z,a],a]=0 \}\\
{\rm Ker}_L\,B=\{z\in L\ :\ &[[z,[[x,a],a]],[[y,a],a]]=0 \hbox{ and }\\& [[[[x,a],a]],[[[y,a],a],z]]=0 \hbox{ for every $x,y\in L$}\}.
\end{align*}
In terms of operators,
\begin{align*}
[[z,[[&x,a],a]],[y,a],a]=(z)(A^2X+XA^2-2AXA)(A^2Y+YA^2-2AYA)\\&=(z)(A^2XYA^2)
\end{align*}
because $A^3=0$, $A^2XA^2=0$, $A^2YA^2=0$ and $A^2XAYA=AXA^2YA=AXAYA^2$ by \ref{QJordan}(1). Thus, if $z\in {\rm ker}_L\, \{a\}$, $(z)A^2=0$, so $[[z,[[x,a],a]],[[y,a],a]]=0$.

Moreover,  $[[z,a],a]=0$ implies that $$0={\rm ad}_{[[z,a],a]}(v)=(v)(ZA^2+A^2Z-2AZA) \hbox{ for every $v\in L$.}$$
This means that $2AZA=ZA^2+A^2Z$; multiplying by $A$ on the right we obtain $2AZA^2=ZA^3+A^2ZA=AZA^2$, so $AZA^2=0$ and $A^2ZA=0$ again by \ref{QJordan}(1). In particular,
\begin{align*}
[[[[x,&a],a]],[[[y,a],a],z]]= (x)A^2((YA^2+A^2Y-2AYA)Z-Z(YA^2+A^2Y-2AYA))\\&=(x)(-A^2ZYA^2+2A^2ZAYA)\\&=(x)(-A^2ZYA^2)=(x)(ZA^2YA^2-2AZAYA^2)\\&=(x)(-2AZA^2YA)=0.
\end{align*}
\end{proof}

\begin{defn}
Given two (right) Jordan pair disystems $(V^+,V^-)$ and $(W^+,W^-)$, we say that the pair of $\phi$-linear maps  $$(\Phi_1, \Phi_2):(V^+,V^-)\to (W^+,W^-)$$ is a {\bf homomorphism of (right) Jordan pair disystems} if
\begin{align*}
&\ \Phi_1(\{x,y,z\}_1)=\{\Phi_1(x),\Phi_2(y),\Phi_1(z)\}_1,\\
&\ \Phi_1(\{x,y,z\}_2)=\{\Phi_1(x),\Phi_2(y),\Phi_1(z)\}_2,\\
&\ \Phi_2(\{x,y,z\}_1)=\{\Phi_2(x),\Phi_1(y),\Phi_2(z)\}_1, \\
&\ \Phi_2(\{x,y,z\}_2)=\{\Phi_2(x),\Phi_1(y),\Phi_2(z)\}_2
\end{align*}
for every $x,z\in V^{\sigma}$, $y\in V^{-\sigma}$, $\sigma=\pm$.
\end{defn}

\begin{thm}
    Let $L$ be a (right) Leibniz algebra, let $a\in L$ be a $Q$-Jordan element and consider the abelian inner ideal $B = [[L,a],a]$. Then there exists an epimorphism of (right) Jordan pair disystems between the double pair disystem $(L_a,L_a)$ and the subquotient $ (B, L/\textup{\text{Ker}}_LB)$.\\
    In particular, when $\textup{\text{ker}}_L\, \{a\}=\textup{\text{Ker}}_LB$, both (right) Jordan pair disystems are isomorphic.
\end{thm}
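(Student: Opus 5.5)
The plan is to write down an explicit pair of maps and check the four homomorphism identities in operator notation. I will reduce everything with the relations $A^3=0$, $AXA^2=A^2XA$ and $A^2XA^2=0$ from Definition \ref{QJordan}(1).

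\textbf{The maps.} Define
$$\Phi_1:L_a\to B,\quad \overline{x}\mapsto \lambda[[x,a],a],\qquad \Phi_2:L_a\to L/{\rm Ker}_LB,\quad \overline{x}\mapsto \mu\,(x+{\rm Ker}_LB),$$
where $\lambda,\mu\in\phi$ are invertible normalizing scalars, fixed later so that the factors $\frac12$ in $x\circ y=\frac12[x,[y,a]]$ are absorbed.

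The map $\Phi_1$ is well defined by the definition of ${\rm ker}_L\{a\}$, and it is onto $B$ by the definition of $B$. The map $\Phi_2$ is well defined by Lemma \ref{kernelscontained}, and it is obviously onto. So once the homomorphism identities hold, $(\Phi_1,\Phi_2)$ is an epimorphism.

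For the last claim, suppose ${\rm ker}_L\{a\}={\rm Ker}_LB$. Then $\Phi_2$ is injective. Also $\Phi_1(\overline{x})=0$ means $[[x,a],a]=0$, i.e. $x\in{\rm ker}_L\{a\}$, so $\Phi_1$ is injective as well, and we get an isomorphism.

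\textbf{The identities.} The double $(L_a,L_a)$ carries the triple products of Remark \ref{doubleJ} built from $\circ$. Expanding, $\{x,y,z\}_1$ and $\{x,y,z\}_2$ in $L_a$ are $\frac14$ times sums of expressions of the form $(x)W$, where $W$ is a word of length four in $A,Y,Z$, possibly with $x$ and $y$ interchanged in the case of $\{\ ,\ ,\ \}_2$.

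For the first component, I compare $\Phi_1(\{x,y,z\}_i)$ with the subquotient products. These are
$$\{\Phi_1x,\Phi_2y,\Phi_1z\}_1=\lambda^2\mu\,[[[x,a],a],[y,[[z,a],a]]]$$
and $-\lambda^2\mu\,[[y,[[x,a],a]],[[z,a],a]]$. Written as $(x)A^2(\cdots)$, or as $(y)(\cdots)$, they expand into words containing several $A$'s. Most of these words vanish by the grading-free relations (1). The survivors should match the expansion of $\lambda[[\{x,y,z\}_i,a],a]$, which multiplies every word on the right by $A^2$, and then again only a few terms survive.

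For the second component, equalities are only required modulo ${\rm Ker}_LB$. By Lemma \ref{kernelscontained} it suffices to show that the difference $d$ satisfies $[[d,a],a]=0$, i.e. $(d)A^2=0$. This reduces to the same kind of word computation: multiply each expansion on the right by $A^2$ and simplify. Terms lying in $[L,B]$, $[B,L]$ or $[[L,B],[L,B]]$ can also be discarded directly using Lemma \ref{lem:ker}.

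\textbf{Main obstacle.} The bookkeeping is the expected difficulty. One part is determining $\lambda,\mu$ consistently across all four identities; I expect something like $\lambda=\frac12$, $\mu=1$ or $\mu=\frac12$, to be read off from the simplest identity $\Phi_1(\{x,y,z\}_1)$. The other part is the $\{\ ,\ ,\ \}_2$ identity for $\Phi_2$. There the left-multiplication term $y\circ(x\circ z)$ makes $y$ the leftmost argument, so the relations (1) must be used with different letters in the middle slot. I would first try to cancel terms using the symmetry $\{x,y,z\}_2=\{z,y,x\}_2$, i.e. (JPD1), on both sides before expanding.
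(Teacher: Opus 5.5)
Your plan is the paper's proof: the same maps $\Phi_1(\bar x)=\lambda[[x,a],a]$ and $\Phi_2(\bar x)=\mu(x+{\rm Ker}_LB)$, the same word calculus in $A,X,Y,Z$ using (1), and, for the second component, the same reduction to showing that the difference lies in ${\rm ker}_L\{a\}$ or in $[L,B]$. The one correction concerns the normalization. In each identity the left side carries a single factor $\lambda$ (resp.\ $\mu$) and the right side carries $\lambda^2\mu$ (resp.\ $\lambda\mu^2$), so all four identities reduce to the same condition $\lambda\mu=-\frac14$ and consistency is automatic. Your tentative values $\lambda=\frac12$, $\mu\in\{1,\frac12\}$ therefore fail, already in sign; the paper takes $\lambda=\frac14$ and $\mu=-1$.
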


\begin{proof}
Let us define  $(\Phi_1, \Phi_2) \colon (L_a, L_a) \rightarrow (B, L/\textup{\text{Ker}}_LB)$  by $$\Phi_1( \bar{x} ) := \frac 14 [[x,a],a] \hbox{ and } \Phi_2(\bar{x}) := - x + \text{Ker}_LB$$
for $\bar x=x+{\rm ker}_L\, \{a\}\in L_a$. The maps $\Phi_1$ and $\Phi_2$
 are clearly onto and well defined by Lemma \ref{kernelscontained}.

To see that $(\Phi_1, \Phi_2)$ is a homomorphism of (right) Jordan pair disystems, recall that in the (right) Jordan pair disystem $(L_a,L_a)$ we have two trilinear products
\begin{align*}
 \{\bar{x},  \, \bar{y}, \, \bar{z}\}_1 &= (\bar{x}\circ(\bar{y}\circ \bar{z}) - (\bar{x}\circ \bar{z})\circ \bar{y} + (\bar{x}\circ \bar{y})\circ \bar{z})\\&
 =\frac 14 ( {[x,[[y,[z,a]],a]]} - {[[x,[z,a]],[y,a]]} +{[[x,[y,a]],[z,a]]})+{\rm ker}_L\, \{a\}\\
 \{\bar{x},  \, \bar{y}, \, \bar{z}\}_2&=((\bar{y}\circ\bar{z})\circ \bar{x} + (\bar{y}\circ \bar{x})\circ \bar{z} - \bar{y}\circ (\bar{x}\circ \bar{z}))\\&
 =\frac 14({[[y,[z,a]],[x,a]]} +{[[y,[x,a]],[z,a]]} - {[y,[[x,[z,a]],a]]})+{\rm ker}_L\, \{a\}
\end{align*}
for every $\bar x=x+{\rm ker}_L\, \{a\}$, $\bar y=y+{\rm ker}_L\, \{a\}$, $\bar z=z+{\rm ker}_L\, \{a\}\in L_a$. We will denote adjoint maps by capital letters and will make use of \ref{QJordan}(1) without explicitly mentioning it.

\noindent -- Let us check that $\Phi_1( \{\bar{x},  \, \bar{y}, \, \bar{z}\}_1)= \{\Phi_1(\bar{x}),  \, \Phi_2(\bar{y}), \, \Phi_1(\bar{z})\}_1$:
\begin{align*}
\bullet\ &\Phi_1( \{\bar{x},  \, \bar{y}, \, \bar{z}\}_1)=\frac 1{16}( [[[x,[[y,[z,a]],a]],a],a] - [[[[x,[z,a]],[y,a]],a],a] \\&+[[[[x,[y,a]],[z,a]],a],a]).\\
\bullet\ &\{\Phi_1(\bar{x}),  \, \Phi_2(\bar{y}), \, \Phi_1(\bar{z})\}_1=\{ \frac 14[[x,a],a],\, -y+\text{Ker}_LB,\, \frac 14 [[z,a],a]\}_1
\\&=-\frac 1{16}[ [[x,a],a], [y, [[z,a],a]]]
\end{align*}
In terms of operators acting on $x$,
\begin{align*}
   \bullet\  &[[[x,[[y,[z,a]],a]],a],a] - [[[[x,[z,a]],[y,a]],a],a] +[[[[x,[y,a]],[z,a]],a],a]\\&=(x)([Y,[Z,A],A]A^2-[Z,A][Y,A]A^2+[Y,A][Z,A]A^2)  \\
   &=(x)(AYAZA^2+AZAYA^2-A^2YZA^2-AZAYA^2+AYAZA^2)\\&=(x)(2AYAZA^2-A^2YZA^2),\\
  \bullet\  & -[ [[x,a],a], [y, [[z,a],a]]]=-(x)(A^2[Y,[[Z,A],A]])\\&=-(x)(A^2YZA^2-2A^2YAZA),
\end{align*}
proving that
$$
\Phi_1( \{\bar{x},  \, \bar{y}, \, \bar{z}\}_1)= \{\Phi_1(\bar{x}),  \, \Phi_2(\bar{y}), \, \Phi_1(\bar{z})\}_1.
$$

\noindent -- Let us check that $\Phi_1( \{\bar{x},  \, \bar{y}, \, \bar{z}\}_2)= \{\Phi_1(\bar{x}),  \, \Phi_2(\bar{y}), \, \Phi_1(\bar{z})\}_2$:

\begin{align*}
\bullet\ &\Phi_1( \{\bar{x},  \, \bar{y}, \, \bar{z}\}_2)=\frac 1{16}({[[[[y,[z,a]],[x,a]],a],a]} +{[[[[y,[x,a]],[z,a]],a],a]} \\&- {[[[y,[[x,[z,a]],a]],a],a]}).\\
\bullet\ &\{\Phi_1(\bar{x}),  \, \Phi_2(\bar{y}), \, \Phi_1(\bar{z})\}_2=\{ \frac 14[[x,a],a],\, -y+\text{Ker}_LB,\, \frac 14 [[z,a],a]\}_2
\\&=\frac 1{16}[[y,[[x,a],a]],[[z,a],a]].
\end{align*}
In terms of operators acting on $y$,
\begin{align*}
   \bullet\  &[[[[y,[z,a]],[x,a]],a],a] +[[[[y,[x,a]],[z,a]],a],a] - [[[y,[[x,[z,a]],a]],a],a]\\&
   =(y)([Z,A][X,A]A^2+[X,A][Z,A]A^2-[[X,[Z,A]],A]A^2)\\&
   =(y)(AZAXA^2+AXAZA^2-AXAZA^2-AZAXA^2+A^2ZXA^2)\\&
   =(y)A^2ZXA^2,\\
      \bullet\  &[[y,[[x,a],a]],[[z,a],a]]=(y)(XA^2+A^2X-2AXA)(ZA^2+A^2Z-2AZA)\\&
   =(y)A^2XZA^2=(y)A^2ZXA^2
\end{align*}
proving that $$\Phi_1( \{\bar{x},  \, \bar{y}, \, \bar{z}\}_2)= \{\Phi_1(\bar{x}),  \, \Phi_2(\bar{y}), \, \Phi_1(\bar{z})\}_2.$$

\noindent -- Let us check that $\Phi_2( \{\bar{x},  \, \bar{y}, \, \bar{z}\}_1)= \{\Phi_2(\bar{x}),  \, \Phi_1(\bar{y}), \, \Phi_2(\bar{z})\}_1$:
\begin{align*}
\bullet\ &\Phi_2( \{\bar{x},  \, \bar{y}, \, \bar{z}\}_1)=-\frac 14 ( {[x,[[y,[z,a]],a]]} - {[[x,[z,a]],[y,a]]} +{[[x,[y,a]],[z,a]]})+\text{Ker}_LB\\
=&-\frac 14 ({[x,[[y,[z,a]],a]]}+[x, [[y,a],[z,a]]]  )+ \text{Ker}_LB.\\
\bullet\ &\{\Phi_2(\bar{x}),  \, \Phi_1(\bar{y}), \, \Phi_2(\bar{z})\}_1=\{-x+ \text{Ker}_LB, \, \frac14[[y,a],a] , \, -z+\text{Ker}_LB\}_1\\&
=\frac14[x, [[[y,a],a], z]]+\text{Ker}_LB.
\end{align*}
Let us see that
$$
[x, [[[y,a],a], z]]+ {[x,[[y,[z,a]],a]]}+[x, [[y,a],[z,a]]]\in \text{Ker}_LB.
$$
Indeed,
\begin{align*}
&[x, [[[y,a],a], z]]+ {[x,[[y,[z,a]],a]]}+[x, [[y,a],[z,a]]]\\&=[x, [[[y,a],z]],a]]+[x,[[y,a],[a,z]]]+{[x,[[y,[z,a]],a]]}+[x, [[y,a],[z,a]]]\\
&=[x, [[[y,a],z]],a]]+{[x,[[y,[z,a]],a]]}\\&=[x,[[[[y,z],a],a]]]+[x,[[y,[a,z]],a]]+{[x,[[y,[z,a]],a]]}\\
&=[x,[[[[y,z],a],a]]]\in [L,B]\subseteq \text{Ker}_LB \hbox {\qquad (by \ref{lem:ker})}
\end{align*}
proving that
$$
\Phi_2( \{\bar{x},  \, \bar{y}, \, \bar{z}\}_1)= \{\Phi_2(\bar{x}),  \, \Phi_1(\bar{y}), \, \Phi_2(\bar{z})\}_1.
$$

\noindent -- Let us check that $\Phi_2( \{\bar{x},  \, \bar{y}, \, \bar{z}\}_2)= \{\Phi_2(\bar{x}),  \, \Phi_1(\bar{y}), \, \Phi_2(\bar{z})\}_2$:
\begin{align*}
\bullet\ &\Phi_2( \{\bar{x},  \, \bar{y}, \, \bar{z}\}_2)=-\frac 14({[[y,[z,a]],[x,a]]} +{[[y,[x,a]],[z,a]]} - {[y,[[x,[z,a]],a]]})+\text{Ker}_LB.\\
\bullet\ &\{\Phi_2(\bar{x}),  \, \Phi_1(\bar{y}), \, \Phi_2(\bar{z})\}_2
=\{-x+ \text{Ker}_LB, \, \frac14[[y,a],a] , \, -z+\text{Ker}_LB\}_2\\&
=-\frac14[[[[y,a],a],x],z]+\text{Ker}_LB.
\end{align*}
Let us see that
$$
[[[[y,a],a],x],z]-[[y,[z,a]],[x,a]]-[[y,[x,a]],[z,a]]+[y,[[x,[z,a]],a]]\in{\rm ker}_L\, \{a\}.
$$
In terms of operators acting on $y$,
\begin{align*}
   \bullet\  &[[[[[[y,a],a],x],z],a],a]=(y)A^2XZA^2=(y)A^2ZXA^2\\
   \bullet\  &-[[[[y,[z,a]],[x,a]],a],a]=-(y)AZAXA^2\\
   \bullet\  &-[[[[y,[x,a]],[z,a]],a],a]=-(y)AXAZA^2\\
   \bullet\  &[[[y,[[x,[z,a]],a]],a],a]=(y)(AXAZA^2+AZAXA^2-A^2ZXA^2)
\end{align*}
proving that
   $$\Phi_2( \{\bar{x},  \, \bar{y}, \, \bar{z}\}_2)= \{\Phi_2(\bar{x}),  \, \Phi_1(\bar{y}), \, \Phi_2(\bar{z})\}_2$$
   because ${\rm ker}_L\, \{a\}\subseteq  \text{Ker}_LB$.
\end{proof}

{\bf Declaration of competing interest}. None declared.

\bibliographystyle{plain}
\bibliography{ref}

\end{document}